\documentclass[12pt]{article}
\usepackage[utf8]{inputenc}
\usepackage{lmodern}
\usepackage{graphicx, xcolor, mathrsfs}
\usepackage{amsfonts}
\usepackage{stmaryrd}
\usepackage{blkarray, bigstrut}
\usepackage{mathtools}
\usepackage[nice]{nicefrac}
\usepackage{amsmath,amsthm,amssymb}
\usepackage[mathlines]{lineno}
\usepackage{mathabx}
\usepackage{accents}

\usepackage{enumerate}
\usepackage[colorlinks=true,citecolor=black,linkcolor=black,urlcolor=blue]{hyperref}
\usepackage[top=.9in, bottom=.9in, left=.5in , right=.5in]{geometry}
\usepackage{caption}
\usepackage{xifthen} 
\usepackage{wrapfig}
\usepackage[numbers, square]{natbib}
\usepackage{changes}
\usepackage{comment}
\usepackage{float}
\usepackage{ifthen}
\mathtoolsset{showonlyrefs}
\usepackage{tikz}
\usepackage{caption}                       
\usetikzlibrary{arrows}
\usetikzlibrary{graphs,graphs.standard}
\usetikzlibrary{positioning,arrows.meta}
\usetikzlibrary{shapes.multipart}
\usetikzlibrary{arrows}
\usetikzlibrary{automata}
\definecolor{arrowblue}{RGB}{0,0,0}  

\usepackage[splitrule]{footmisc} 
\interfootnotelinepenalty=10000 

\setlength{\parskip}{.5\baselineskip} \setlength{\parindent}{0pt}

\usepackage{caption}
\captionsetup{font=small}

\newtheorem{theorem}{Theorem}[section]
\newtheorem{lemma}[theorem]{Lemma}

\newtheorem{proposition}[theorem]{Proposition}
\newtheorem{corollary}[theorem]{Corollary}
\newtheorem{assumption}{Assumption}[section]

\theoremstyle{definition}
\newtheorem{remark}{Remark}[section]
\newtheorem{example}[remark]{Example}

\newtheorem{definition}[assumption]{Definition}

\newcommand{\E}[2][]{\ensuremath{\mathbb{E}_{#1}\left[#2 \right]}}
\newcommand{\Prob}[2][]{\ensuremath{\mathbb{P}_{#1} \left(#2 \right)}}

\newcommand{\gen}[1]{\ensuremath{G_{#1}}}

\newcommand{\eps}{\varepsilon}

\newcommand{\Supe}{\mathcal{S}}

\DeclareMathOperator{\tpath}{path}

\DeclareMathOperator{\degg}{deg}
\DeclareMathOperator{\de}{d}
\newcommand{\outdeg}[1]{\ensuremath{\degg^{+}(#1)}}

\newboolean{parentmodel}
\setboolean{parentmodel}{false} 



\title{On a sufficient condition for explosion in CMJ branching processes and applications to recursive trees}
\author{T. Iyer\footnote{Weierstrass Institute for Applied Analysis and Stochastics, Mohrenstrasse 39, 10117 Berlin, Germany.}}
\date{November 19, 2024}

\begin{document}
\maketitle
\abstract{We provide sufficient criteria for explosion in Crump-Mode-Jagers branching process, via the process producing an infinite path in finite time. As an application, we deduce a phase-transition in the infinite tree associated with a class of recursive tree models with fitness, showing that in one regime every node in the tree has infinite degree, whilst in another, the tree is locally finite, with a unique infinite path. The latter class encompasses many models studied in the literature, including the weighted random recursive tree, the preferential attachment tree with additive fitness, and the Bianconi-Barab\'asi model, or preferential attachment tree with multiplicative fitness.}
\noindent  \bigskip
\\
{\bf Keywords:}  Explosive Crump-Mode-Jagers branching processes, Bianconi-Barab\'asi model, preferential attachment with fitness, weighted random recursive trees. 
\\\\
{\bf AMS Subject Classification 2010:} 60J80, 90B15, 05C80. 
\section{Introduction}
In Crump-Mode-Jagers (CMJ) branching processes, an ancestral individual produces offspring according to a random collection of ``birth times'' on the non-negative real line. At each birth time, a new individual begins to produce offspring, according to a collection of points identically distributed to those associated with the ancestral individual, but now shifted by their birth time, and one is generally interested in features of the population as time varies. For example, if for some $t \in [0,\infty)$, there exists infinitely many individuals at time $t$, one says that explosion occurs. 

Suppose that $\xi(t)$ denotes the random number of individuals produced by the ancestral individual by time $t$ (with $\xi(0)$ denoting the number of individuals produced ``instantaneously''). A result of Komj\'athy~\cite[Theorem~3.1(b)]{Komjathy2016ExplosiveCB} states that, if for all $t > 0$ $\xi(t) < \infty$ almost surely, and for some $t_0 > 0$, we have $\E{\xi(t_0)} < 1$, explosion does not occur. The latter condition is equivalent to having $\E{\xi(0)} < 1$ and $\E{\xi(t_0)}<\infty$ for some $t_0 > 0$. In this short note, we provide a sufficient condition that guarantees the emergence of an infinite ancestral path of individuals in finite time in CMJ branching processes (see Theorem~\ref{theorem:vert-expln} and Corollary~\ref{cor:vert-expln}). This is thus also a sufficient condition for explosion. 

CMJ processes have been well studied in the so called \emph{Malthusian} regime (for example~\cite{kingman1975, nerman_81, olofsson-x-log-x}), however, far fewer results are available outside the Malthusian regime, including, in particular, the regime where explosion may occur. In this regard, Komj\'athy~\cite{Komjathy2016ExplosiveCB} has made a number of foundational contributions, under the assumption that $\xi(t) < \infty$ almost surely for all $t > 0$, (that is, the assumption that \emph{sideways explosion} does not occur). These include characterising the distribution of the explosion time in terms of the solution of a functional fixed point equation. Necessary and sufficient conditions for explosion in \emph{branching random walks} with plump offspring distributions have been provided in~\cite{amini-et-al}, where the authors showed that this property is equivalent, in this case, to a weaker condition called min-summability. This was extended to \emph{age dependent branching processes}, and \emph{processes with incubation} in~\cite[Section~5 and Section~7]{Komjathy2016ExplosiveCB}. In~\cite{bertoin-local-expl}, the authors provided sufficient criteria for \emph{local explosion} in related growth-fragmentation processes. Recently, when one \emph{allows} for the possibility of sideways explosion, sufficient criteria have been provided for explosion to occur via an infinite path, or by a single node having infinite degree in~\cite{inhom-sup-pref-attach}. 

\subsection{Random recursive trees with fitness} \label{sec:rec-fit}
One commonly used application of CMJ branching process is to evolving random tree models (an approach originating in~\cite{pittel}). 
In \emph{recursive trees with fitness}, nodes $n$, labelled by the natural numbers, arrive one at a time, and are assigned an i.i.d. random weight $W_{n}$. This node then is connected with an edge directed outwards\footnote{In different formulations of this model the edge is directed \emph{inwards} instead, and the fitness function is a function of the \emph{in-degree}.} from a randomly selected target node $v$. The node $v$ is selected at the $n$th time-step with probability proportional to a \emph{fitness} function $f(\outdeg{v},W_{v})$ which incorporates information about the out-degree of $v$ at the $n$th time-step. Particular instances of this model include 
inhomogeneous preferential attachment models, such as the Bianconi-Barab\'asi model (introduced in~\cite{bianconibarabasi2001}) and preferential attachment with additive fitness (introduced in~\cite{ergun}); and the weighted random recursive tree~\cite{wrt1,Sen21}. Inhomogeneous preferential attachment models are often used as models for the evolution of complex networks arising in applications. 

Interesting effects can occur in these recursive tree models, often as a result of the inhomogeneities. Suppose that $N_{k}(n)$ denotes the number of nodes of out-degree $k$ in the model at the $n$th time-step. Note that this implies that $\sum_{k=0}^{\infty} k N_{k}(n)/n = 1$, since there are $n$ edges in the tree at the $n$th time-step. Suppose also that $\lim_{n \to \infty} \frac{N_{k}(n)}{n}$ exists for all $k$, which we denote by $p_{k}$. It is conjectured~\cite{rec-trees-fit} that, if \[\sum_{j=1}^{\infty} \mathbb{E}\left[\prod_{i=0}^{j-1} \frac{f(i,W)}{f(i,W) + \lambda}\right] < 1,
\quad \text{for any $\lambda > 0$ such that the sum converges,}
\] 
then $\sum_{k=0}^{\infty} k p_{k} < 1$. This indicates that a positive fraction of `mass' of edges has been lost to a sub-linear number of nodes of `large degrees'. This effect, which has been proved in a number of particular instances, is known as \emph{condensation}  (see, for example,~\cite{borgs-chayes, dereich-mailler-morters}). Moreover, if the sum does not converge for \emph{any} $\lambda > 0$, an \emph{extreme condensation effect} is believed to occur, where $\sum_{k=0}^{\infty} k p_{k} = 0$. One of the goals of this article is to investigate this extreme condensation regime, in the scenario when $\sum_{i=1}^{\infty} \frac{1}{f(i,W)} = \infty$ almost surely. By drawing connections to related work in~\cite{inhom-sup-pref-attach}, we show that a phase transition occurs in this regime. In one phase every node in the infinite tree associated with the model has maximum degree possible (which may be infinite), whilst in another phase, this tree is locally finite, with a unique infinite path. One may interpret the latter phase as an extreme effect of competition from `newer' nodes, with so much mass of edges carried away by these nodes, that any individual in the infinite tree only has finite out-degree. 

\subsection{Overview}
Section~\ref{sec:preliminaries} deals with preliminaries: a formal description of the model (which generalises the classical CMJ framework) in Section~\ref{sec:notation}, and measure theoretic technicalities in Section~\ref{sec:measurability}. 
The main results appear in Section~\ref{sec:main}: in Theorem~\ref{theorem:vert-expln} and Corollary~\ref{cor:vert-expln} we provide a sufficient condition for explosion to occur via an infinite path appearing in finite time. These results use facts from~Section~\ref{sec:inherited} where we note that a number of properties of CMJ processes that are `inherited', including explosion, occur with probability $0$ or $1$, when one conditions on `survival' of the process. Section~\ref{sec:applications} deals with applications to the recursive tree with fitness model: Theorem~\ref{theorem:rec-trees-structure} provides a classification of the structure of the infinite tree $\mathcal{T}_{\infty}$ associated with this model, whilst Theorem~\ref{theorem:inf-path-trans} applies these results to a `linear fitness' regime, proving the aforementioned phase-transition. The linear fitness regime encompasses many existing models including the Bianconi-Barab\'asi model, preferential attachment model with additive fitness, and the weighted random recursive tree.

\section{Preliminaries} \label{sec:preliminaries}
\subsection{Notation and model preliminaries} \label{sec:notation}

In general, we will follow similar notation to that used in~\cite{inhom-sup-pref-attach}. First, we use $\mathbb{N} := \{1, 2, \ldots\}$, $\mathbb{N}_{0} := \mathbb{N} \cup 0$, $\mathbb{N}_{\infty} := \mathbb{N} \cup \infty$. We consider \emph{individuals} as being labelled by elements of the infinite \emph{Ulam-Harris} tree $\mathcal{U}_\infty : = \bigcup_{n \geq 0} \mathbb{N}^{n}$; where $\mathbb{N}^{0} := \{\varnothing\}$ contains a single element $\varnothing$ which we call  the \emph{root}. We denote elements $u \in \mathcal{U}_{\infty}$ as a tuple, so that, if $u = (u_{1}, \ldots, u_{k}) \in \mathbb{N}^{k}, k \geq 1$, we write $u = u_{1} \cdots u_{k}$. An individual $u = u_1u_2\cdots u_k$ is to be interpreted recursively as the $u_k$th \emph{child} of the individual $u_1 \cdots u_{k-1}$; for example, $1, 2, \ldots$ represent the offspring of $\varnothing$. We consider a Crump-Mode-Jagers branching process as a random labelling\footnote{Note that the notation we introduce here is actually more general than the classical CMJ branching process, assumed to satisfy Assumption~\ref{ass:classical-cmj}. In principle, one may consider structures with more correlations, or also consider labelling structures that allow these labels to take values in an arbitrary vector space instead of $[0,\infty]$.} of the elements of $\mathcal{U}_{\infty}$ with elements of $[0, \infty]$, one considers \emph{birth-times}.
In order to do so, we assume the existence of a complete probability space $(\Omega, \Sigma, \mathbb{P})$ and equip $\mathcal{U}_{\infty}$ with the sigma algebra generated by singleton sets. We then, assume the existence of a random (measurable) function $X: \Omega \times \mathcal{U}_{\infty} \rightarrow [0, \infty]$, where, for $u \in \mathcal{U}_{\infty}$, we think of $X(uj)$ as the \emph{displacement} or \emph{waiting time} between the $(j-1)$th and $j$th child of $u$. We use the values of $X$ to associate \emph{birth times} $\mathcal{B}(u)$ to individuals $u \in \mathcal{U}_{\infty}$. In particular, we define $\mathcal{B}: \Omega \times \mathcal{U}_{\infty} \rightarrow [0, \infty]$ recursively as follows:  
\[\mathcal{B}(\varnothing) : = 0 \quad \text{and for $u \in \mathcal{U}_{\infty}, i \in \mathbb{N}$,} \quad \mathcal{B}(ui) := \mathcal{B}(u) + \sum_{j=1}^{i} X(uj).\]
Consequentially, a value of $X(ui) = \infty$ indicates that the individual $u$ has stopped producing offspring, and does not produce $i$ children or more. 

We introduce some notation related to elements $u \in \mathcal{U}_{\infty}$: we use $|\cdot|$ to measure the \emph{length} of a tuple $u$, so that, if $u = \varnothing$ we set $|u| = 0$, whilst if $u = u_{1} \cdots u_{k}$ then $|u| = k$. 
Given $\ell \leq |u|$, we set $u_{|_\ell} := u_{1} \cdots u_{\ell}$.  
We say a subset $T \subset \mathcal{U}_{\infty}$ is a \emph{tree} if, given that $u \in T$, we also have $u_{|_{\ell}} \in T$, for each $\ell \leq |u|$. Note that any such trees can be viewed as graphs in the natural way, connecting nodes to their children. 

For each $t \in [0, \infty]$, we set $\mathscr{T}_{t} = \{(x, \mathcal{B}(x)) \in \mathcal{U}_{\infty} \times [0, \infty]: \mathcal{B}(x) \leq t\}$, we think of this as the \emph{genealogical tree} of individuals with birth time at most $t$. We also define for each $k \in \mathbb{N}_{0}$, $\gen{k} : = \left\{(x, \mathcal{B}(x)) \in \mathcal{U}_{\infty} \times [0, \infty]: |x| \leq k \right\}$. We set $\mathscr{T}_{\infty} := \bigcup_{k \in \mathbb{N}} \gen{k}$. 
We denote by $(\mathcal{F}_{t})_{t \geq 0}$ and $(\mathcal{G}_{k})_{k \in \mathbb{N}_{0}}$ the natural filtrations generated by $(\mathscr{T}_{t})_{t \geq 0}$ and $(\gen{k})_{k \in \mathbb{N}}$ respectively\footnote{More formally, we view $(\mathscr{T}_{t})_{t \geq 0}$ as a measurable mapping 
$\Omega \times [0, \infty) \times \mathcal{U}_{\infty} \rightarrow [0, \infty]$, where, $\mathscr{T}_{t}(u) = \mathcal{B}(u) \mathbf{1}_{\mathcal{B}(u) \leq t} + \infty \mathbf{1}_{\mathcal{B}(u) > t}$. Likewise, we view $(\gen{k})_{k \in \mathbb{N}_{0}}$ as a measurable map $\Omega \times \mathbb{N}_{0} \times \mathcal{U}_{\infty} \rightarrow [0, \infty]$, where, $\gen{k}(u) = \mathcal{B}(u) \mathbf{1}_{|u| \leq k}$. Also, as a formality, by taking completions if necessary, we assume that each $\mathcal{F}_{t}$ and $\mathcal{G}_{k}$ is complete.}. In relation to the process $(\mathscr{T}_{t})_{t \geq 0}$, we define the stopping times $(\tau_{k})_{k \in \mathbb{N}_{0}}$ such that $\tau_{k} := \inf\{t \geq 0: |\mathscr{T}_{t}| \geq k\}$. One readily verifies that $(|\mathscr{T}_{t}|)_{t \geq 0}$ is right-continuous, and thus $|\mathscr{T}_{\tau_{k}}| \geq k$. 
We call $\tau_{\infty} := \lim_{k \to \infty} \tau_{k}$ the \emph{explosion time} of the process. For a given choice of $X$, we say that $\mathscr{T}_{\infty}$ is an $X$-\emph{Crump-Mode-Jagers (CMJ) branching process}. We also use this term in regards to the stochastic processes $(\mathscr{T}_t)_{t \geq 0}$ and $(\gen{k})_{k \in \mathbb{N}_{0}}$. In this paper we denote by $\mathcal{S}$ the event
\begin{equation} \label{eq:survival}
    \mathcal{S} := \left\{\forall k \in \mathbb{N} \, \exists u \in \mathbb{N}^{k}: \mathcal{B}(u) < \infty\right\}. 
\end{equation}
In words, this event indicates that individuals $u$ of arbitrarily large size $|u|$ are born. We call this event \emph{survival}, and the complementary event $\mathcal{S}^{c}$ \emph{extinction}. 

It will be helpful to have notation concerning ``shifts'' of $\mathscr{T}_{\infty}$ representing the sub-tree associated with some $u \in \mathcal{U}_{\infty}$. With regards to birth-times, for any $v \in \mathcal{U}_{\infty}$ we define 
\[
\mathcal{B}^{(u)}(v) := 
\begin{cases}
\mathcal{B}(uv) - \mathcal{B}(u) & \text{if $\mathcal{B}(u) < \infty$}
\\ \infty & \text{otherwise.}
\end{cases}
\]
For $u \in \mathcal{U}_{\infty}$, we define $\mathscr{T}_{\infty}^{\downarrow u} : = \left\{(x, \mathcal{B}^{(u)}(x)), x \in \mathcal{U}_{\infty} \right\}$.

For each $u \in \mathcal{U}_{\infty}$ it will be helpful to a have a map $\xi^{(u)}: \Omega \times [0, \infty] \rightarrow \mathbb{N}_{0}$ indicating the number of children $u$ has produced, more precisely, we define $\xi^{(u)}(t)$ such that 
\[
\xi^{(u)}(t) = \begin{cases}
	\sum_{i=1}^{\infty}\mathbf{1}_{\{ \mathcal{B}^{(u)}(i)\leq \mathcal{B}(u) + t\}} &  \text{if $\mathcal{B}(u) < \infty$;}
	\\ 0 & \text{otherwise.}
\end{cases}
\] 
We denote by $\xi(t) := \xi^{(\varnothing)}(t)$. 


\subsection{Measure theoretical technicalities} \label{sec:measurability}
Formally, we consider $\mathscr{T}_{\infty}$, as a random mapping $\Omega \times \mathcal{U}_{\infty} \rightarrow [0, \infty]$, such that $u \mapsto \mathcal{B}(u)$. Viewing the branching process as a random function from $\mathcal{U}_{\infty}$ to $[0, \infty]$, it is helpful to define a sigma algebra on the space of functions $[0, \infty]^{\mathcal{U}_{\infty}}$, representing \emph{properties} of the branching process that can be ``measured''. In particular, we want to ensure events involving uncountable unions, such as ``there exists an infinite path in finite time'', can be measured. This is the reason for the technicalities in this section.  

In this regard, given $X$, and hence $\mathcal{B}$, we define the function $\iota_{\mathcal{B}}: \Omega \rightarrow [0, \infty]^{\mathcal{U}_{\infty}}$ such that $\omega \mapsto \mathcal{B}(\cdot, \omega)$. We equip $[0, \infty]^{\mathcal{U}_{\infty}}$ with the pushforward sigma algebra induced by the map $\iota_{\mathcal{B}}$, that is, the sigma algebra $\mathcal{J}:= \left\{A \subseteq [0, \infty]^{\mathcal{U}_{\infty}}: \iota_{\mathcal{B}}^{-1}(A) \in \Sigma \right\}$. The following is a foundational definition from descriptive set theory. Here, we reformulate Definition~1.10.1.~\cite{bogachev}:

\begin{definition}[Souslin scheme]
    Given a collection of subsets $\mathcal{C}$ of a given set $S$, a \emph{Souslin scheme} over $\mathcal{C}$ is a mapping that associates to every sequence $n_1 \cdots n_{k}$ of natural numbers, a set $C_{n_1, \cdots n_{k}} \in \mathcal{C}$. A Souslin operation is the map that associates, to every Souslin scheme $\left\{C_{n_1, \cdots n_{k}}, n_1 \cdots n_k \in \mathcal{U}_{\infty} \right\}$ over $\mathcal{C}$, the set 
    \[
    \bigcup_{(n_{i}) \in \mathbb{N}^{\infty}} \bigcap_{k=1}^{\infty} C_{n_1 \cdots n_{k}}. 
    \]
    The collection of such sets, along with the empty set is denoted $\mathcal{A}(\mathcal{C})$.
\end{definition}

The following beautiful theorem is also classical from descriptive set theory, here we reformulate Theorem~1.10.5.~\cite{bogachev}:

\begin{theorem} \label{theorem:closure-souslin}
    Suppose that a sigma algebra $\mathcal{C}$ is complete with respect to a finite, non-negative measure $\mu$. Then $\mathcal{A}(\mathcal{C}) = \mathcal{C}$. 
\end{theorem}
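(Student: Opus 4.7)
The plan is to leverage completeness of $\mathcal{C}$: the inclusion $\mathcal{C}\subseteq\mathcal{A}(\mathcal{C})$ is trivial (take a constant Souslin scheme), so it suffices to show that every Souslin scheme $\{C_{n_1\cdots n_k}\}\subseteq\mathcal{C}$ with associated set $A=\bigcup_{(n_i)\in\mathbb{N}^{\infty}}\bigcap_{k}C_{n_1\cdots n_k}$ admits a set $B\in\mathcal{C}$ with $B\subseteq A$ and $\mu^{*}(A\setminus B)=0$, where $\mu^{*}$ denotes the outer measure extending $\mu$. Completeness of $\mathcal{C}$ then forces $A\setminus B\in\mathcal{C}$ and hence $A\in\mathcal{C}$. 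Before constructing $B$, I would perform the standard regularization, replacing each $C_{n_1\cdots n_k}$ by $\bigcap_{j=1}^{k}C_{n_1\cdots n_j}$: this does not alter $A$, and renders the scheme \emph{monotone}, i.e.\ $C_{s\cdot n}\subseteq C_{s}$ for every finite tuple $s$ and every $n\in\mathbb{N}$.

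For each finite tuple $s=(n_{1},\ldots,n_{k})$ (including the empty tuple), introduce the partial Souslin set
\[
A_{s}:=\bigcup_{\substack{(m_i)\in\mathbb{N}^{\infty}\\ m_j=n_j \text{ for } j\le k}}\bigcap_{\ell\ge 1}C_{m_1\cdots m_\ell},
\]
so that $A_{\varnothing}=A$, $A_{s}\subseteq C_{s}$ for nonempty $s$ by monotonicity, and $A_{s}=\bigcup_{n\in\mathbb{N}}A_{s\cdot n}$. Since $\bigl(\bigcup_{n\le N}A_{s\cdot n}\bigr)_{N\ge 1}$ increases to $A_{s}$, continuity from below of $\mu^{*}$ lets one pick $N_{s}\in\mathbb{N}$ so that $\mu^{*}\bigl(A_{s}\setminus\bigcup_{n\le N_{s}}A_{s\cdot n}\bigr)$ is as small as one wishes. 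Fix $\varepsilon>0$ and construct an admissible subtree $T\subseteq\mathcal{U}_{\infty}$ inductively: place $\varnothing\in T$, and once $s\in T$ has been placed, include $s\cdot n\in T$ for $n=1,\ldots,N_{s}$, where $N_{s}$ is chosen to satisfy $\mu^{*}\bigl(A_{s}\setminus\bigcup_{n\le N_{s}}A_{s\cdot n}\bigr)<\varepsilon\,2^{-|s|-1}/|T_{|s|}|$; the finite number $|T_{|s|}|$ of previously-placed nodes at level $|s|$ is known when $N_{s}$ is selected, as it depends only on earlier choices. Writing $T_{k}:=\{s\in T:|s|=k\}$, set
\[
B^{\varepsilon}:=\bigcap_{k\ge 1}\bigcup_{s\in T_{k}}C_{s},
\]
which lies in $\mathcal{C}$ because each inner union is finite. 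The auxiliary sets $V_{k}:=\bigcup_{s\in T_{k}}A_{s}$ are decreasing in $k$, and a telescoping estimate yields $\mu^{*}(A\setminus\bigcap_{k}V_{k})\le\sum_{k\ge 0}\mu^{*}(V_{k}\setminus V_{k+1})<\sum_{k\ge 0}\varepsilon\,2^{-k-1}=\varepsilon$. Since $V_{k}\subseteq\bigcup_{s\in T_{k}}C_{s}$, one has $\bigcap_{k}V_{k}\subseteq B^{\varepsilon}$, whence $\mu^{*}(A\setminus B^{\varepsilon})<\varepsilon$. Taking $B:=\bigcup_{n\ge 1}B^{1/n}\in\mathcal{C}$ furnishes the required inner approximation.

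The main obstacle is verifying the inclusion $B^{\varepsilon}\subseteq A$. Membership of a point $x\in B^{\varepsilon}$ supplies, at every level $k$, some $s_{k}\in T_{k}$ with $x\in C_{s_{k}}$, but these $s_{k}$ need not be consistent across $k$. The resolution exploits monotonicity: the set $S_{x}:=\{s\in T:x\in C_{s}\}$ is prefix-closed (because $x\in C_{s\cdot n}$ forces $x\in C_{s}$), contains at least one node at every level, and is finitely branching (since each $T_{k}$ is finite). K\"onig's lemma then extracts an infinite branch $(n_{1},n_{2},\ldots)$ along which $x\in\bigcap_{\ell}C_{n_{1}\cdots n_{\ell}}\subseteq A$. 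Once this combinatorial step is in place, the remainder is outer-measure bookkeeping.
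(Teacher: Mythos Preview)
The paper does not give its own proof of this theorem: it is quoted as a classical result from descriptive set theory, with a reference to Bogachev, Theorem~1.10.5, and used as a black box. There is therefore nothing in the paper to compare your argument against.

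That said, your proposal is the standard Szpilrajn--Marczewski proof and is essentially correct. A couple of minor points worth tightening: (i) when you invoke ``continuity from below of $\mu^{*}$'' you are using that the outer measure induced by a finite measure on a $\sigma$-algebra satisfies $\mu^{*}\bigl(\bigcup_n E_n\bigr)=\lim_n\mu^{*}(E_n)$ for increasing $(E_n)$; this is true (via measurable hulls) but is not a property of arbitrary outer measures, so a one-line justification would be appropriate. (ii) In the K\"onig's lemma step, $C_{\varnothing}$ has not been defined, so strictly $S_x$ may not contain the root; either set $C_{\varnothing}$ to be the ambient space or apply K\"onig to $S_x\cup\{\varnothing\}$. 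Neither of these affects the validity of the argument.
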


This theorem shows that, since $(\Omega, \Sigma, \mathbb{P})$ is a complete probability space, a number of events, expressed as uncountable unions, are measurable. In particular, if, for $u \in \mathcal{U}_{\infty}$ we define the random variable
\[
\tau_{\tpath}(u) := \inf_{t \geq 0}\left\{\exists (n_k) \in \mathbb{N}^{\infty}: \forall k \in \mathbb{N} \; \mathcal{B}^{(u)}(n_1 \cdots n_{k}) \leq t\right\},
\]
i.e., the first time $t$ such that there exists an infinite path (or \emph{ray}) starting at $u$ with all individuals on the path born before $t$.
Then, as an application of Theorem~\ref{theorem:closure-souslin}, we know that, for any $t \in [0, \infty]$, the event $\left\{\tau_{\tpath}(u) < t\right\}$ is measurable. In particular, that $\tau_{\tpath}(u)$ is a random variable. For brevity, we write $\tau_{\tpath}:= \tau_{\tpath}(\varnothing)$. We say that \emph{vertical explosion} occurs if $\tau_{\tpath} < \infty$. This terminology complements the notion of \emph{sideways explosion} from~\cite{Komjathy2016ExplosiveCB}, when $\sum_{i=1}^{\infty} X(i) < \infty$ with positive probability. 

\section{Main results and proofs} \label{sec:main}

In general in this paper, we will assume the following throughout:
\begin{equation} \label{eq:cmj-assumption}
	((X(uj))_{j \in \mathbb{N}}) \quad \text{are independent \ for different $u \in \mathcal{U}_{\infty}$}.
\end{equation}
Note that this implies that for distinct $u_1, u_2 \in \mathcal{U}_{\infty} $ with $|u_1| = |u_2|$, the values of $(\mathcal{B}^{(u_1)}(v))_{v \in \mathcal{U}_{\infty}}$ and $(\mathcal{B}^{(u_2)}(v))_{v \in \mathcal{U}_{\infty}}$ are independent on $\mathcal{B}(u_1), \mathcal{B}(u_2) < \infty$, hence so are the sub-trees $\mathscr{T}^{\downarrow u_1}_{\infty}, \mathscr{T}^{\downarrow u_2}_{\infty}$. Note that, since $|u_1| = |u_2|$ and are distinct, the sub-trees $\mathscr{T}^{\downarrow u_1}_{\infty}, \mathscr{T}^{\downarrow u_2}_{\infty}$ are disjoint. 

We will also, mostly, require the following assumption:

\begin{assumption} \label{ass:classical-cmj}
    In addition to~\eqref{eq:cmj-assumption}, we have 
    \begin{equation} \label{eq:sub-tree-identical-law}
    \mathscr{T}^{\downarrow u}_{\infty} \sim \mathscr{T}_{\infty} \quad \text{on} \quad \left\{\mathcal{B}(u) < \infty \right\}. 
\end{equation}
More formally, the regular conditional distribution of $\mathscr{T}^{\downarrow u}_{\infty}$ on $\left\{\mathcal{B}(u) < \infty \right\}$ coincides with the distribution of $\mathscr{T}_{\infty}$.
\end{assumption}

Note that Assumption~\ref{ass:classical-cmj} is the classical definition of a CMJ branching process, and is the same as assuming that $((X(uj))_{j \in \mathbb{N}})$ are i.i.d. \ for different $u \in \mathcal{U}_{\infty}$, hence this assumption is stronger than~\eqref{eq:cmj-assumption}.


\subsection{Inherited properties and a conditional $0-1$ law} \label{sec:inherited}

Given a set $\mathcal{P} \in \mathcal{J}$, we say that $\mathscr{T}_{\infty}$ has \emph{property} $\mathcal{P}$ if $\mathscr{T}_{\infty} \in \mathcal{P}$. Using the map $\iota_{\mathcal{B}}: \Omega \rightarrow [0,\infty]^{\mathcal{U}_{\infty}}$, we often identify properties with events $B \in \Sigma$ in the probability space $(\Omega, \Sigma, \mathbb{P})$. Recall the event $\mathcal{S}$ from~\eqref{eq:survival}. 

\begin{definition}[Inherited and bidirectionally inherited properties]
     We say that a property $\mathcal{P}$ is \emph{inherited} if, whenever $\mathscr{T}_{\infty} \in \mathcal{P}$, then, for each $i$ with $\mathcal{B}(i) < \infty$, we have $\mathscr{T}_{\infty}^{\downarrow i} \in \mathcal{P}$, and, moreover, $\mathscr{T}_{\infty} \in \mathcal{P}$ if the event $\mathcal{S}^{c}$ occurs. We say that a property $\mathcal{P}$ is \emph{bidirectionally 
     inherited} if it is inherited and it is the case that $\mathscr{T}_{\infty} \in \mathcal{P}$, if and only if for each $i$ with $\mathcal{B}(i) < \infty$, we have $\mathscr{T}_{\infty}^{\downarrow i} \in \mathcal{P}$. 
\end{definition}

\begin{remark} \label{rem:inherited-survival}
    Since the event of extinction $\mathcal{S}^{c}$ implies that extinction also occurs in the tree $\mathscr{T}_{\infty}^{\downarrow i}$, it follows that any inherited property occurs on extinction. 
\end{remark}

\begin{example}
    Some examples of inherited properties include $\{\tau_{\tpath} < \infty\}^{c}$ (i.e., the event "there is no infinite path in finite time") and $\{\tau_{\infty} < \infty\}^{c}$. 
    The event $\{\tau_{\tpath} < \infty\}^{c}$ is also bidirectionally inherited, however, one may construct counter examples to show that $\{\tau_{\infty} < \infty\}^{c}$ is not bidirectionally inherited in general, when Assumption~\ref{ass:classical-cmj} is not satisfied.
    
\end{example}

The following is well-known for inherited properties, at least with respect to Bienaym\'{e}-Galton-Watson processes 
(see, e.g.,~\cite[Proposition~5.6]{lyons}). We provide proof here that works in a rather general setting. 

\begin{proposition}[Conditional $0-1$ law] \label{prop:cond-zero-one}
    Let $\mathscr{T}_{\infty}$ be an $X$-CMJ branching process. Then, we have the following:
    \begin{enumerate}
        \item \label{item:class-cmj} Suppose Assumption~\ref{ass:classical-cmj} is satisfied. Then, for any inherited property $\mathcal{P}$, either $\Prob{\mathscr{T}_{\infty} \in \mathcal{P}} =1$ or $\Prob{\left\{\mathscr{T}_{\infty} \in \mathcal{P}\right\} \cap \mathcal{S}} = 0$. In particular, if we have $\Prob{\mathcal{S}} > 0$, then $\Prob{\mathscr{T}_{\infty} \in \mathcal{P} \, | \, \mathcal{S}} \in \{0, 1\}$.
        \item \label{item:det-prop} More generally, suppose that only ~\eqref{eq:cmj-assumption} is satisfied, and in addition, for each $k \in \mathbb{N}$, the set $\left\{u \in \mathbb{N}^{k}: \mathcal{B}(u) < \infty \right\}$ is deterministic. Then, for any bidirectionally inherited property $\mathcal{P}$, we have $\Prob{\mathscr{T}_{\infty} \in \mathcal{P}} \in \left\{0, 1\right\}$.
    \end{enumerate}
\end{proposition}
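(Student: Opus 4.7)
My plan is to treat Parts~\ref{item:class-cmj} and~\ref{item:det-prop} with complementary techniques: for the former, an iterated-inheritance calculation coupled with the classical convergence of iterated probability generating functions for Bienaym\'e--Galton--Watson processes; for the latter, a direct application of Kolmogorov's zero-one law to a tail sigma algebra.

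Writing $p := \mathbb{P}(\mathscr{T}_{\infty} \in \mathcal{P})$ and $q := \mathbb{P}(\mathcal{S}^{c})$, Remark~\ref{rem:inherited-survival} gives $p \geq q$. For Part~\ref{item:class-cmj}, iterating the inheritance condition through $n$ generations shows
\[
\{\mathscr{T}_{\infty} \in \mathcal{P}\} \subseteq \mathcal{E}_{n} := \bigcap_{u \in \mathbb{N}^{n},\, \mathcal{B}(u) < \infty} \{\mathscr{T}^{\downarrow u}_{\infty} \in \mathcal{P}\}.
\]
With $Z_{n} := \#\{u \in \mathbb{N}^{n} : \mathcal{B}(u) < \infty\}$, Assumption~\ref{ass:classical-cmj} makes the sub-trees $(\mathscr{T}^{\downarrow u}_{\infty})_{u : \mathcal{B}(u) < \infty}$ i.i.d.\ copies of $\mathscr{T}_{\infty}$, so conditionally on $Z_{n}$ we have $\mathbb{P}(\mathcal{E}_{n} \mid Z_{n}) = p^{Z_{n}}$ (with the convention $p^{\infty} = 0$ when $p < 1$). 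Hence $p \leq \mathbb{E}[p^{Z_{n}}] = f^{\circ n}(p)$, where $f$ is the probability generating function of $\xi(\infty)$. Since $(Z_{n})$ is a Bienaym\'e--Galton--Watson process with extinction probability $q$, the classical iterate-convergence theorem gives $f^{\circ n}(p) \to q$ as $n \to \infty$, so $p \leq q$, and therefore $p = q$ unless $p = 1$; the conclusion $\mathbb{P}(\mathcal{P} \cap \mathcal{S}) = p - q = 0$ then follows whenever $p \neq 1$.

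For Part~\ref{item:det-prop}, the added hypothesis that every $I_{n} := \{u \in \mathbb{N}^{n} : \mathcal{B}(u) < \infty\}$ is deterministic, combined with \emph{bidirectional} inheritance, upgrades the inclusion above to an equality $\{\mathscr{T}_{\infty} \in \mathcal{P}\} = \mathcal{E}_{n}$ for every $n$. Because $I_{n}$ is deterministic, $\mathcal{E}_{n}$ depends only on the sub-trees $\mathscr{T}^{\downarrow u}_{\infty}$ for $u \in I_{n}$, and hence only on $\{X(w) : |w| > n\}$. Thus $\{\mathscr{T}_{\infty} \in \mathcal{P}\} \in \bigcap_{n} \sigma(X(w) : |w| > n)$. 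Under~\eqref{eq:cmj-assumption} the sigma algebras $\sigma(X(w) : |w| = n+1)$ are independent across $n$, so Kolmogorov's zero-one law trivialises this tail sigma algebra and delivers $\mathbb{P}(\mathscr{T}_{\infty} \in \mathcal{P}) \in \{0, 1\}$.

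The main obstacle is the degenerate case $\xi(\infty) = 1$ almost surely in Part~\ref{item:class-cmj}, where $f$ is the identity and $f^{\circ n}(p) = p$ for every $n$, so the iterate argument is uninformative. Here Assumption~\ref{ass:classical-cmj} reduces the surviving genealogy to a single ray whose displacements are i.i.d., on which any inherited property becomes a ``downward shift-invariant'' event $\mathcal{P} \subseteq \sigma^{-1}(\mathcal{P})$ for the unilateral shift $\sigma$. The increasing union $\mathcal{P}^{*} := \bigcup_{n} \sigma^{-n}(\mathcal{P})$ is then genuinely shift-invariant, and by measure-preservation of $\sigma$ satisfies $\mathbb{P}(\mathcal{P}^{*}) = p$; ergodicity of i.i.d.\ sequences under $\sigma$ forces $\mathbb{P}(\mathcal{P}^{*}) \in \{0, 1\}$, yielding $p \in \{0, 1\} = \{q, 1\}$.
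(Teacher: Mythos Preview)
Your argument is correct, and for Item~\ref{item:det-prop} it coincides with the paper's: both reduce the bidirectionally inherited event to a tail event for the generation-wise independent families $\big((X(uj))_{j\in\mathbb{N}}\big)_{|u|=k}$ and invoke Kolmogorov's zero--one law.

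For Item~\ref{item:class-cmj} you take a genuinely different route. You push the inclusion $\{\mathscr{T}_\infty\in\mathcal{P}\}\subseteq\mathcal{E}_n$ through the Bienaym\'e--Galton--Watson structure of the generation sizes, obtaining $p\le f^{\circ n}(p)\to q$ via iterate convergence of the offspring generating function, and then dispose of the exceptional case $\xi(\infty)=1$ a.s.\ by an ergodicity argument for the unilateral shift on the single surviving ray. The paper instead bounds $\Prob{\mathscr{T}_\infty\in\mathcal{P}\mid\mathcal{G}_k}$ by $p\,\mathbf{1}\{\exists u\in\mathbb{N}^k:\mathcal{B}(u)<\infty\}+\mathbf{1}\{\forall u\in\mathbb{N}^k:\mathcal{B}(u)=\infty\}$ (using only that the product of the conditional probabilities is at most any one factor, each equal to $p$), and then passes to the limit with L\'evy's zero--one law to obtain $\mathbf{1}\{\mathscr{T}_\infty\in\mathcal{P}\}\le p\,\mathbf{1}_{\mathcal{S}}+\mathbf{1}_{\mathcal{S}^c}$ almost surely, from which the dichotomy is immediate. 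The L\'evy martingale approach is shorter and case-free: it never needs to track the distribution of $Z_n$, works verbatim when $\xi(\infty)=\infty$ with positive probability, and requires no separate treatment of the degenerate linear-chain regime. Your approach, on the other hand, makes the link to classical Galton--Watson machinery explicit and is perhaps more familiar; the price is the extra ergodicity step and some care in justifying $\E{p^{Z_n}}=f^{\circ n}(p)$ and the iterate convergence when the offspring law is improper, which you should state rather than defer to the ``classical'' theorem.
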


\begin{remark}
    Note that, in Item~\ref{item:det-prop}, the assumption that $\left\{u \in \mathbb{N}^{k}: \mathcal{B}(u) < \infty \right\}$ is deterministic implies that $\mathcal{S}$ is a deterministic property.
\end{remark}

\begin{proof}[Proof of Proposition~\ref{prop:cond-zero-one}]
First recall from Remark~\ref{rem:inherited-survival} that for any inherited property $\mathcal{P}$, $\mathcal{S}^{c}$ implies $\mathscr{T}_{\infty} \in \mathcal{P}$. Also, recall the definition of $\mathcal{G}_{k}$, the sigma algebra generated by the birth times of all individuals up to the $k$th generation of the process. 
Now, $\mathscr{T}_{\infty} \in \mathcal{P}$ implies that, for all $i \in \mathbb{N}$, with $\mathcal{B}(i) < \infty$ we have $\mathscr{T}^{\downarrow i}_{\infty} \in \mathcal{P}$. Applying this recursively, for all $x \in \mathbb{N}^{k}$ with $\mathcal{B}(x) < \infty$, we have $\mathscr{T}^{\downarrow x}_{\infty} \in \mathcal{P}$. By~\eqref{eq:cmj-assumption} the sub-trees $(\mathscr{T}^{\downarrow u}_{\infty}: |u| = k, \mathcal{B}(u) < \infty)$ are mutually independent, conditional on the values $\mathcal{B}(u), |u| = k$. Thus, 
\begin{linenomath}
\begin{align} \label{eq:lev-martingale}
\nonumber \Prob{\mathscr{T}_{\infty} \in \mathcal{P} \, | \, \mathcal{G}_{k}} & \leq \left(\prod_{u \in \mathbb{N}^{k}: \mathcal{B}(u_i) < \infty} \Prob{ \mathscr{T}^{\downarrow u}_{\infty} \in \mathcal{P}\, \bigg | \,  \mathcal{B}(u) < \infty} \right) \mathbf{1} \left\{\exists u \in \mathbb{N}^{k} : \mathcal{B}(u) < \infty\right\} \\ & \hspace{5cm}  + \mathbf{1} \left\{\forall u \in \mathbb{N}^{k} : \mathcal{B}(u) = \infty\right\}
\\ & \leq  \Prob{\mathscr{T}_{\infty} \in \mathcal{P}} \mathbf{1} \left\{\exists u \in \mathbb{N}^{k} : \mathcal{B}(u) < \infty\right\} + \mathbf{1} \left\{\forall u \in \mathbb{N}^{k} : \mathcal{B}(u) = \infty\right\},
\end{align}
\end{linenomath}
where the second inequality follows from the fact that, by~\eqref{eq:sub-tree-identical-law}, on the event $\left\{\mathcal{B}(u) < \infty\right\}$, the tree $\mathscr{T}^{\downarrow u}_{\infty}$ has the same law as $\mathscr{T}_{\infty}$. Taking limits as $k \to \infty$, by the L\'{e}vy zero-one law, the left-hand side is an indicator random variable, whilst the two indicators on the right-hand side converge almost surely to $\mathbf{1}\left\{\mathcal{S} \right\}$ and $\mathbf{1}\left\{\mathcal{S}^{c}\right\}$ respectively. Thus, almost surely, we have
\begin{linenomath}
\begin{align} \label{eq:zero-one-positive-set}
\mathbf{1} \left\{ \mathscr{T}_{\infty} \in \mathcal{P}\right\}  & \leq \Prob{ \mathscr{T}_{\infty} \in \mathcal{P}} \mathbf{1}\left\{\mathcal{S} \right\} + \mathbf{1}\left\{\mathcal{S}^{c}\right\}.
\end{align}
\end{linenomath}
Therefore, if there exists a non-null set of $\omega \in \Omega$ such that we have both $\mathbf{1} \left\{ \mathscr{T}_{\infty} \in \mathcal{P}\right\}(\omega) = 1$ and $\mathbf{1}\left\{\mathcal{S} \right\}(\omega) =1$, then~\eqref{eq:zero-one-positive-set} implies that $\Prob{\mathscr{T}_{\infty} \in \mathcal{P}} = 1$. But this is equivalent to $\Prob{ \left\{ \mathscr{T}_{\infty} \in \mathcal{P}\right\} \cap \mathcal{S}} > 0$ which implies Item~\ref{item:class-cmj}. 
For Item~\ref{item:det-prop}, if $\mathcal{P}$ is bidirectionally recursive, we have
\begin{equation} \label{eq:kolmogorov}
\Prob{\mathscr{T}_{\infty} \in \mathcal{P}} = \Prob{ \bigcap_{k \in \mathbb{N}} \bigcap_{u \in \mathbb{N}^{k}: \mathcal{B}(u_i) < \infty} \mathscr{T}^{\downarrow u}_{\infty} \in \mathcal{P}}. 
\end{equation}
Suppose $F_{k}$ denotes the sigma algebra generated by $(X(uj)_{j \in \mathbb{N}, u \in \mathbb{N}^{k}})$. The fact that $\left\{u \in \mathbb{N}^{k}: \mathcal{B}(u) < \infty \right\}$ is deterministic implies that the right-hand side of~\eqref{eq:kolmogorov} is a tail event with respect to $(F_{k})_{k \in \mathbb{N}_{0}}$, and by~\eqref{eq:cmj-assumption} we may apply the Kolmogorov $0-1$ law. 
\end{proof}
\begin{remark}
Note that the proof of Item~\ref{item:class-cmj} in Proposition~\ref{prop:cond-zero-one} can be made more general than assuming Assumption~\ref{ass:classical-cmj}. Indeed, for~\eqref{eq:lev-martingale}, we need only assume that, for 
$u1 \in \mathcal{U}_{\infty}$, with $u \in \mathcal{U}_{\infty}$ (i.e, the first child of an individual), we have $\mathscr{T}^{\downarrow u1}_{\infty} \sim \mathscr{T}_{\infty}$ on $\left\{\mathcal{B}(u1) < \infty \right\}$. 
\end{remark}

\subsection{A sufficient criteria for vertical explosion}
\begin{theorem} \label{theorem:vert-expln}
Let $\mathscr{T}_{\infty}$ be an $X$-CMJ branching process satisfying Assumption~\ref{ass:classical-cmj}. Assume that there exists two sequences $(t_{i}) \in [0,\infty)^{\mathbb{N}} $ and $(M_{i}) \in \mathbb{N}^{\mathbb{N}}$ satisfying the following: we have $\sum_{i=1}^{\infty} t_{i} < \infty$ and 
\begin{equation} \label{eq:summ-condition}
\sum_{i=1}^{\infty} \Prob{\xi(t_{i}) \leq M_{i+1}}^{M_{i}} < \infty.
\end{equation} 
Then, $\Prob{\tau_{\tpath} < \infty \,| \, \Supe} = 1$.
\end{theorem}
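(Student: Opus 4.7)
The strategy is to combine the conditional $0$--$1$ law (Proposition~\ref{prop:cond-zero-one}) with an explicit construction. Since the property $\{\tau_{\tpath} = \infty\}$ is inherited (as noted in the Examples preceding Proposition~\ref{prop:cond-zero-one}) and $\{\tau_{\tpath} < \infty\} \subseteq \mathcal{S}$ (any infinite path in finite time exhibits finitely-born individuals at every generation), Proposition~\ref{prop:cond-zero-one}, Item~\ref{item:class-cmj} reduces the theorem to showing $\Prob{\tau_{\tpath} < \infty} > 0$; the conditional statement is vacuous if $\Prob{\mathcal{S}} = 0$.

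To produce a positive-probability event on which $\tau_{\tpath} < \infty$, I will build an infinite path inductively via random candidate sets $S_i \subset \mathbb{N}^i$ with $|S_i| = M_i$. The transition rule: given $S_i$, search for $u \in S_i$ with $\xi^{(u)}(t_i) \geq M_{i+1}$; if found, let $u_i^\star$ be the earliest such $u$, and set $S_{i+1} := \{u_i^\star j : 1 \leq j \leq M_{i+1}\}$---the first $M_{i+1}$ children of $u_i^\star$, each born by $\mathcal{B}(u_i^\star) + t_i$---otherwise declare failure at step $i$. If no failure occurs from some index $i_0$ onward, the ancestors $(u_i^\star)_{i \geq i_0}$ form an infinite path with $\mathcal{B}(u_i^\star) \leq \mathcal{B}(u_{i_0}^\star) + \sum_{j \geq i_0} t_j < \infty$, so $\tau_{\tpath} < \infty$.

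By Assumption~\ref{ass:classical-cmj} and~\eqref{eq:cmj-assumption}, the sub-trees rooted at distinct elements of $S_i$ are (conditionally on $|S_i| = M_i$ with all members born in finite time) i.i.d.\ copies of $\mathscr{T}_{\infty}$; hence the conditional failure probability at step $i$ is at most $\Prob{\xi(t_i) \leq M_{i+1}}^{M_i}$. Combining the chain rule with~\eqref{eq:summ-condition} yields, for $i_0$ large,
\[
\Prob{\text{no failure at any step } i \geq i_0 \,\mid\, \text{valid } S_{i_0}} \geq \prod_{i \geq i_0}\bigl(1 - \Prob{\xi(t_i) \leq M_{i+1}}^{M_i}\bigr) > 0.
\]

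The remaining task---which I expect to be the main technical obstacle---is producing $S_{i_0}$ of size $M_{i_0}$ with finite birth times on a positive-probability event. Since $\Prob{\xi(t_i) \leq M_{i+1}}^{M_i} \to 0$, one has $\Prob{\xi(\infty) \geq M_{i+1}+1} > 0$ for all large $i$, so the offspring distribution is \emph{fat enough} deep in the tree. Bootstrapping from this---for example, by running an auxiliary pre-step from the root with a large (possibly infinite) time horizon, or by restricting attention to a sub-tree rooted at a suitably deep individual (whose existence is guaranteed on $\mathcal{S}$)---should supply a valid $S_{i_0}$ with positive probability. Combining with the estimate above gives $\Prob{\tau_{\tpath} < \infty} > 0$, and the $0$--$1$ law concludes.
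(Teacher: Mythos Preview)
Your argument is essentially the paper's: the paper too defines nested events $\mathcal{E}_i$ by asking, at stage $i$, for some $u_i \in [M_i]$ among the children of the previously selected vertex with $\xi^{(u_1\cdots u_i)}(t_i) > M_{i+1}$, bounds the conditional failure probability by $\Prob{\xi(t_i)\le M_{i+1}}^{M_i}$ using independence and Assumption~\ref{ass:classical-cmj}, obtains a positive infinite product, and then invokes the $0$--$1$ law of Proposition~\ref{prop:cond-zero-one}. The only cosmetic difference is that you commit to a specific $u_i^\star$ and read off the infinite path directly, whereas the paper keeps the existential events and extracts the path at the end via K\H{o}nig's lemma applied to the locally finite subtree $\{u=u_1\cdots u_k: u_j\le M_j,\ \mathcal{B}(u)\le \sum t_i\}$.

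Your initialisation worry is overblown and is not the ``main technical obstacle''. The paper simply seeds with $[M_1]$, the first $M_1$ labels among the root's children, and runs the recursion from generation $1$. You are right that not all of these need be born, so the first factor is not literally $1-\Prob{\xi(t_1)\le M_2}^{M_1}$; but all that is required for the product argument is $\Prob{\mathcal{E}_1}>0$. After shifting the sequences so that every term in~\eqref{eq:summ-condition} is strictly below $1$ (which is harmless for the hypotheses), one has $\Prob{\xi(t_1)>M_2}>0$, and then $\Prob{\mathcal{E}_1}\ge \Prob{\xi(\infty)\ge 1}\cdot\Prob{\xi(t_1)>M_2}>0$, the first factor being positive whenever $\Prob{\mathcal{S}}>0$. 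No bootstrap, auxiliary pre-step, or deep restart is needed---the seed is just the root.
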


\begin{remark}
    Note that, since $\Prob{\tau_{\tpath} < \infty \,| \, \Supe} \leq \Prob{\tau_{\infty} < \infty \,| \, \Supe}$, Theorem~\ref{theorem:vert-expln} also provides a sufficient condition for
    $\mathscr{T}_{\infty}$ to be explosive on survival. Note also that in the case that $\xi(t) < \infty$ almost surely, for all $t > 0$, the tree $\mathscr{T}_{\tau_{\infty}}$, must be locally finite. As any locally finite graph has an infinite path by K\H{o}nig's lemma, in this case, $\tau_{\infty}$ and $\tau_{\tpath}$ coincide.   
\end{remark}

\begin{remark} \label{remark:non-contradiction}
    Note that the assumptions of Theorem~\ref{theorem:vert-expln} cannot be satisfied if $\E{\xi(t_0)} < 1$, for some $t_0 > 0$. Indeed, any candidate sequences $(t_{i})_{i \in \mathbb{N}}$, $(M_i)_{i \in \mathbb{N}}$ must contain infinitely many terms such that $M_{i+1}/M_{i} \geq 1$ and $t_{i} \leq t_{0}$, for which  
    \begin{linenomath}
    \begin{align*}
    \Prob{\xi(t_{i}) \leq M_{i+1}}^{M_{i}} & \geq (1- \Prob{\xi(t_{0}) > M_{i+1}})^{M_{i}} \geq \left(1 - \frac{M_{i}}{M_{i+1}}\E{\xi(t_{0})}\right) > 0, 
    \end{align*}
    \end{linenomath}
    where the second to last inequality uses Markov's and Bernoulli's inequalities. This implies that the sum in~\eqref{eq:summ-condition} diverges. In fact, if $\E{\xi(t_0)} < 1$, for some $t_0 > 0$ one can adapt the proof of~\cite[Theorem~3.1(b)]{Komjathy2016ExplosiveCB}, to show that actually $\tau_{\tpath} = \infty$ almost surely. 
\end{remark}

Before we prove Theorem~\ref{theorem:vert-expln}, we prove the following corollary:
\begin{corollary} \label{cor:vert-expln}
Let $\mathscr{T}_{\infty}$ be an $X$-CMJ branching process satisfying Assumption~\ref{ass:classical-cmj}.
Suppose that there exists $\eps, \eps', x_0 > 0$, such that, for all $t \in (0, \eps')$, and $x \geq x_0$, we have 
\[
\Prob{\xi(t) > x} > x^{-1} (\log{x})^{1+ \eps} t.  
\]
Then $\Prob{\tau_{\tpath} < \infty \,| \, \Supe} = 1$.
\end{corollary}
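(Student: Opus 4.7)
The plan is to reduce to Theorem~\ref{theorem:vert-expln} by exhibiting explicit sequences $(t_i)_{i \in \mathbb{N}}$ and $(M_i)_{i \in \mathbb{N}}$ meeting its hypotheses. The tail lower bound in the corollary's hypothesis is a \emph{pointwise} statement on $\Prob{\xi(t) > x}$, whereas condition~\eqref{eq:summ-condition} requires an \emph{upper} bound on $\Prob{\xi(t_i) \leq M_{i+1}}^{M_i}$. We therefore convert via the elementary $(1-p)^{M} \leq e^{-pM}$, so the task becomes choosing $(t_i), (M_i)$ so that $M_i M_{i+1}^{-1} (\log M_{i+1})^{1+\eps} t_i$ grows fast enough that $\exp(-M_i M_{i+1}^{-1}(\log M_{i+1})^{1+\eps} t_i)$ is summable, while preserving $\sum_i t_i < \infty$.

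The natural guess is polynomial $t_i$ and exponential $M_i$, since the former forces $t_i \to 0$ fast, while the latter creates a factor $(\log M_{i+1})^{1+\eps}$ polynomial in $i$ and, crucially, a ratio $M_i/M_{i+1}$ that is bounded \emph{away from $0$}. Concretely I would set $M_i := 2^i$ and $t_i := c\, i^{-(1+\eps/2)}$, where $c \in (0, \eps')$ is a constant chosen small enough so that $t_i < \eps'$ for every $i$; note $M_i \geq x_0$ for all sufficiently large $i$, which is all that matters for summability. Then $\sum_i t_i < \infty$ because $\eps/2 > 0$, and the hypothesis of the corollary (applied at $t = t_i$, $x = M_{i+1}$) yields, for $i$ large,
\[
\Prob{\xi(t_i) \leq M_{i+1}}^{M_i} \leq \exp\!\left( - \tfrac{c (\log 2)^{1+\eps}}{2} \cdot \frac{(i+1)^{1+\eps}}{i^{1+\eps/2}}\right),
\]
whose right-hand side is $\exp(-\Theta(i^{\eps/2}))$ and thus summable in $i$. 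Since finitely many initial terms contribute a finite amount, \eqref{eq:summ-condition} holds, and Theorem~\ref{theorem:vert-expln} delivers the conclusion $\Prob{\tau_{\tpath} < \infty \,|\, \Supe} = 1$.

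The step I would scrutinize most carefully is the calibration of the two exponents $\eps/2$ in $t_i$ and $2^i$ in $M_i$: the trade-off is that making $M_i$ grow faster improves the factor $(\log M_{i+1})^{1+\eps}$ but damages the ratio $M_i/M_{i+1}$, whereas making $t_i$ decay slower helps the exponent inside the Markov-type bound but threatens $\sum_i t_i < \infty$. With the exponential choice of $M_i$ the ratio $M_i/M_{i+1}$ is a fixed constant, which is what decouples the two constraints and lets a simple power-law $t_i$ do the job; a polynomial $M_i$, by contrast, would force $t_i$ to decay only like $(\log i)^{-\eps}$ (not summable), and a doubly-exponential $M_i$ would kill the ratio $M_i/M_{i+1}$ too quickly. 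The rest of the argument is routine verification and an appeal to Theorem~\ref{theorem:vert-expln}; no subtlety beyond the choice of parameters is anticipated.
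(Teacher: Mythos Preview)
Your proposal is correct and follows essentially the same route as the paper: both choose $M_i = 2^i$ and $t_i \asymp i^{-(1+\eps/2)}$, use $(1-p)^M \le e^{-pM}$, and obtain a bound of the form $\exp(-\Theta(i^{\eps/2}))$, which is summable. You are in fact slightly more careful than the paper in introducing the constant $c$ to ensure $t_i < \eps'$ from the outset; the paper simply takes $t_i = i^{-(1+\eps/2)}$ and implicitly relies on the fact that the hypothesis is only needed for large $i$.
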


\begin{proof}[Proof of Corollary~\ref{cor:vert-expln}]
    In Theorem~\ref{theorem:vert-expln}, we choose $M_{i} := 2^{i}$, and $t_{i} := i^{-(1+ \eps/2)}$. Then, note that
    \[\Prob{\xi(t_i) > 2^{i+1}} > 2^{-(i+1)}(\log{2})^{1+\eps} (i+1)^{1+\eps} i^{-(1+ \eps/2)} > 2^{-(i+1)}(\log{2})^{1+\eps} i^{\eps/2}.\]
    Therefore, we have
    \begin{linenomath}
        \begin{align*}
            \Prob{\xi(t_{i}) \leq 2^{i+1}}^{2^{i}} = \left(1 - \Prob{\xi(t_i) > 2^{i+1}}\right)^{2^{i}} \leq e^{-2^{i}\Prob{\xi(t_i) > 2^{i+1}}} \leq e^{-i^{\eps/2} \frac{(\log{2})^{1+\eps}}{2}}. 
        \end{align*}
    \end{linenomath}
    The above bound is summable in $i$, hence we conclude the result. 
\end{proof}

\begin{proof}[Proof of Theorem~\ref{theorem:vert-expln}]
In order to prove this theorem, we first define a nested sequence of events $\mathcal{E}_{1} \supseteq \mathcal{E}_{2} \cdots$.   Suppose that $[M_{i}] := \left\{1, \ldots, M_{i} \right\}$. Then, for $i \in \mathbb{N}$ we define  
\[
\mathcal{E}_{i} := \left\{\exists u_1 \in [M_1], \ldots, \exists u_{i} \in [M_i]: \xi^{(u_1)}(t_1) > M_2, \ldots, \xi^{(u_1\cdots u_{i})}(t_{i}) > M_{i+1}\right\}. 
\]
Suppose $t^{*} = \sum_{i=1}^{\infty} t_{i}$. The event $\bigcap_{i=1}^{\infty}\mathcal{E}_{i}$ guarantees that
\[|\left\{u = u_1\cdots u_{k} \in \mathcal{U}: u_1 \leq M_1, \ldots, u_{k} \leq M_{k}, \mathcal{B}(u) \leq t^{*}\right\}| = \infty, \]
and as the elements of this set form a locally finite tree sub-tree of $\mathcal{U}$, K\H{o}nig's lemma implies there exists an infinite path in this tree. In particular,  $\bigcap_{i=1}^{\infty} \mathcal{E}_{i} \subseteq \left\{\tau_{\tpath} < \infty\right\}$.
Now, suppose $\mathcal{E}_{i}$ is satisfied by the existence of a path $u_1, \ldots, u_{i}$. Then, for $\mathcal{E}_{i+1}$ to be satisfied one need only find a node $u_{i+1}$ extending this path such that $\xi^{(u_1, \ldots, u_{i}, u_{i+1})}(M_{i+2}) \leq t_{i+1}$. In addition, conditional on $u_1 \cdots u_{i}$, the processes $\xi^{(u_1\cdots u_{i} 1)}, \ldots, \xi^{(u_1\cdots u_{i} M_{i+1})}$ are independent of $u_1 \cdots u_{i}$, and have the same distribution as $\xi^{(1)}, \ldots, \xi^{(M_{i+1})}$. 
Thus, 
\begin{linenomath}
\begin{align*}
    \Prob{\mathcal{E}_{i+1} \, \bigg | \, \mathcal{E}_{i}} & \geq  \Prob{\exists u_{i+1} \in [M_{i+1}]: \xi^{(u_{i+1})}(t_{i+1}) > M_{i+2}} \\ & = 1- \prod_{u_{i+1} =1}^{M_{i+1}} \Prob{\xi^{(u_{i+1})}(t_{i+1}) \leq M_{i+2}} = 1 - \Prob{\xi(t_{i+1}) \leq M_{i+2}}^{M_{i+1}}.
\end{align*}
\end{linenomath}
Now, since $\sum_{\ell=1}^{\infty} \Prob{\xi(t_{\ell}) \leq M_{\ell+1}}^{M_{\ell}} < \infty$, we have
\[
\Prob{\bigcap_{i=1}^{\infty} \mathcal{E}_{i}} = \lim_{j \to \infty} \Prob{\mathcal{E}_{j}} = \lim_{j \to \infty} \prod_{\ell = 1}^{j} \Prob{\mathcal{E}_{\ell} \, \bigg | \, \mathcal{E}_{\ell -1}}  \geq \prod_{\ell=1}^{\infty} \left(1 - \Prob{\xi(t_{\ell}) \leq M_{\ell+1}}^{M_{\ell}}\right) > 0.
\]
 As $\{\tau_{\tpath} < \infty\}$ is an inherited property, we deduce the result by applying Proposition~\ref{prop:cond-zero-one}. 
\end{proof}

\begin{remark} \label{remark:weakening}
It is possible to weaken Assumption~\ref{ass:classical-cmj} in the proof of Theorem~\ref{theorem:vert-expln}, to include some dependence of $\xi^{(u)}$ on $|u|$. For example, we could assume that each $\xi^{(u)}$ with $|u| = i$ were i.i.d. on $\mathcal{B}(u) < \infty$, depending only on $i$. If $\xi_{i}$ denotes the distribution of $\xi^{(u)}$, on $\mathcal{B}(u) < \infty$, with $|u| = i$, then the $i$th summand in~\eqref{eq:summ-condition} could be replaced by $\Prob{\xi_{i}(t_{i}) \leq M_{i+1}}^{M_{i}}$. This would, in this case, provide criteria for $\Prob{\tau_{\tpath} < \infty} > 0$, and, if we know that for each $i \in \mathbb{N}_{0}$ we have $\lim_{t \to \infty} \xi_{i}(t) = \infty$, Item~\ref{item:det-prop} of Proposition~\ref{prop:cond-zero-one} implies that $\Prob{\tau_{\tpath} < \infty} = 1$. 
\end{remark}

\subsection{Applications to recursive trees with fitness} \label{sec:applications}

In this section, we apply our results to a discrete model of \emph{recursive trees with fitness}. We consider these trees as being rooted with edges directed away from the root. Given a vertex labelled $v$ in a directed tree $T$ we denote its out-degree by $\outdeg{v, T}$.  

\begin{definition}[Recursive tree with fitness]\label{def:wrt}
	Suppose that $(W_{i})_{i \in \mathbb{N}}$ are i.i.d.\ copies of a random variable $W$ that takes values in $\mathcal{W}$, and let $f:\mathbb{N}_{0}\times S \rightarrow [0, \infty)$ denote a fitness function. A $(W, f)$-recursive tree with fitness is the sequence of random trees $(\mathcal{T}_{i})_{i \in \mathbb{N}}$ such that: $\mathcal{T}_{0}$ consists of a single node $0$ with weight $W_{0}$ and for $n \geq 1$, $\mathcal{T}_{n}$ is updated recursively from $\mathcal{T}_{n-1}$ by:
	\begin{enumerate}
		\item Sampling a vertex $j \in \mathcal{T}_{n-1}$ with probability proportional to its fitness, i.e., with probability 
		\begin{equation} \label{eq:rec-step}
			\frac{f(\outdeg{j, \mathcal{T}_{n-1}}, W_j)}{\mathcal{Z}_{n}} \quad \text{ with } \mathcal{Z}_{n} : = \sum_{j=0}^{n-1} f(\outdeg{j, \mathcal{T}_{n-1}}, W_j),
		\end{equation}
  (terminating the process if $\mathcal{Z}_{n} = 0$). 
		\item Connect $j$ with an edge directed outwards to a new vertex $n$ with weight $W_{n}$.
	\end{enumerate}
\end{definition}

Note that, by the property of taking minima of independent exponential random variables, and the memory-less property, this process of trees is the discrete time skeleton process of the following continuous time Markov chain $(\mathscr{T}_{t})_{t \geq 0}$. Nodes $i \in \mathbb{N}_{0}$ have associated i.i.d random weights $W_i \in \mathcal{W}$, and associated independent sequences $(X^{(i)}(j))_{j \in \mathbb{N}}$ with $X^{(i)}(j)$ exponentially distributed with parameter $f(j-1, W_{i})$. The tree $\mathscr{T}_{0}$ consists of an initial node $0$ with random weight $W_0$ and associated exponential random variable $X^{(0)}(1)$. Then, recursively, at the time $\tau_{n}$ corresponding to the elapsure of the $n$th exponential random variable in the process, $X^{(j)}(\outdeg{j, \mathscr{T}_{\tau_{n-1}}} + 1)$, say, 
\begin{itemize}
    \item Connect $j$ with a new vertex $n$ with weight $W_{n}$, and initiate new exponential random variables $X^{(n)}(1)$ and $X^{(j)}(\outdeg{j, \mathscr{T}_{\tau_{n}}} + 1)= X^{(j)}(\outdeg{j, \mathscr{T}_{\tau_{n-1}}} + 2).$
\end{itemize}
As we may consider $n$ as the \emph{child} of $j$, after re-labelling, this continuous time Markov process is an $X$-CMJ branching process satisfying Assumption~\ref{ass:classical-cmj}, where the values of $(X(i), i \in \mathbb{N})$ are such that, conditional on a random weight $W \in \mathcal{W}$, $X(i)$ is an independent exponential random variable with parameter $f(i-1, W)$. We thus have the following proposition, whose proof we omit. Suppose $\stackrel{d.}{=}$ denotes equality in distribution. 

\begin{proposition} \label{prop:equivalence-law-rif}
    If $(\mathscr{T}_{t})_{t \geq 0}$ is an $X$-CMJ branching process as described above, and $(\mathcal{T}_{i})_{i \in \mathbb{N}_{0}}$ is a $(W, f)$-recursive tree with fitness, then, after re-labelling nodes in $(\mathscr{T}_{\tau_{n}})_{n \in \mathbb{N}_0}$ according to order of arrival in the process, we have $(\mathscr{T}_{\tau_{n}})_{n \in \mathbb{N}_0} \stackrel{d.}{=} (\mathcal{T}_{i})_{i \in \mathbb{N}_{0}}$. \hfill $\blacksquare$ 
\end{proposition}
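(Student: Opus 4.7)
The plan is to argue by induction on $n \in \mathbb{N}_{0}$, leveraging two standard properties of independent exponential random variables: the memoryless property, and the observation that if $E_{1}, \ldots, E_{m}$ are independent with rates $\lambda_{1}, \ldots, \lambda_{m}$, then $\min_{i} E_{i} \sim \Exp{\sum_{i} \lambda_{i}}$ is independent of $\arg\min_{i} E_{i}$, which is distributed over $\{1, \ldots, m\}$ with probabilities proportional to $(\lambda_{i})_{i=1}^{m}$.

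First I would dispose of the base case $n = 0$: both $\mathscr{T}_{\tau_{0}}$ (after relabelling) and $\mathcal{T}_{0}$ consist of a single root labelled $0$ with weight $W_{0}$, so they agree in distribution by construction.

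For the inductive step, assuming that $((\mathscr{T}_{\tau_{m}})_{m \leq n-1},(W_0,\ldots,W_{n-1}))$ and $((\mathcal{T}_{m})_{m \leq n-1},(W_0,\ldots,W_{n-1}))$ coincide in law on the event that neither process has terminated, I would condition on $\mathscr{T}_{\tau_{n-1}}$ and the weights, and, for each node $j < n$ present in the tree, identify the \emph{residual} clock time $R_{j}$ until $j$ produces its next child. The key claim is that, conditional on this history, the random variables $(R_{j})_{j < n}$ are mutually independent with $R_{j} \sim \Exp{f(\outdeg{j, \mathscr{T}_{\tau_{n-1}}}, W_{j})}$. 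This is proved by a sub-induction along the jump times $\tau_{1}, \ldots, \tau_{n-1}$: either $j$ acquired its most recent child at some $\tau_{k}$, in which case $R_{j}$ equals $X^{(j)}(\outdeg{j, \mathscr{T}_{\tau_{k}}} + 1) - (\tau_{n-1}-\tau_{k})$, a fresh exponential thinned by the memoryless property, or $j$ itself was born at some $\tau_{k}$ and has yet to produce offspring, in which case $R_{j} = X^{(j)}(1) - (\tau_{n-1}-\tau_{k})$, again reducible to a fresh exponential by memorylessness. Once this independence structure is in hand, the minimum-of-exponentials observation yields
\[
\Prob{j^{*} = j \,\big|\, \mathscr{T}_{\tau_{n-1}}, W_{0}, \ldots, W_{n-1}} = \frac{f(\outdeg{j, \mathscr{T}_{\tau_{n-1}}}, W_{j})}{\sum_{i=0}^{n-1} f(\outdeg{i, \mathscr{T}_{\tau_{n-1}}}, W_{i})},
\]
where $j^{*}$ denotes the node whose clock rings first; this matches the selection rule \eqref{eq:rec-step} exactly. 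The new child is then relabelled $n$ and endowed with a fresh independent weight $W_{n}$ and a new sequence $(X^{(n)}(j))_{j \geq 1}$, precisely as in Definition~\ref{def:wrt}.

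The main obstacle is the careful bookkeeping of the independence of residual clocks at each step, in particular justifying that the newborn node's clock $X^{(n)}(1)$, and the refreshed clock $X^{(j^{*})}(\outdeg{j^{*}, \mathscr{T}_{\tau_{n}}}+1)$ of the parent, are independent of the $\sigma$-algebra generated by the history up to $\tau_{n-1}$ together with the other unrealised residual lifetimes. This is essentially a consequence of the independence assumption \eqref{eq:cmj-assumption} combined with repeated applications of memorylessness, and is notational rather than conceptual. Equality in law of the full sequences $(\mathscr{T}_{\tau_{n}})_{n \in \mathbb{N}_{0}} \stackrel{d.}{=} (\mathcal{T}_{i})_{i \in \mathbb{N}_{0}}$ then follows from matching finite-dimensional distributions by a standard Kolmogorov extension argument.
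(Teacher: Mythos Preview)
Your proposal is correct and follows exactly the approach the paper itself indicates: the paper omits the proof entirely, stating just before the proposition that the claim follows ``by the property of taking minima of independent exponential random variables, and the memory-less property,'' which is precisely the mechanism you unpack via induction on $n$. Your write-up simply fills in the bookkeeping the author chose to suppress.
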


\begin{remark}
    Recall that in the associated $X$-CMJ process we either have $\Prob{\tau_{\infty} = \infty} = 1$ or $\Prob{\tau_{\infty} = \infty} = 1- \Prob{\mathcal{S}}$. By properties of the exponential distribution, conditional on $(\mathcal{Z}_{n})_{n \in \mathbb{N}_0}$, we have $\tau_{\infty} \stackrel{d.}{=} \sum_{n=0}^{\infty} Z_{n}$, with $Z_{n}$ exponentially distributed with parameter $\mathcal{Z}_{n}$. As this is finite if and only if $\sum_{n=0}^{\infty} \mathcal{Z}_{n}^{-1} < \infty$, we deduce that the event $\sum_{n=0}^{\infty} \mathcal{Z}_{n}^{-1} < \infty$ with probability $\Prob{\tau_{\infty} = \infty} = 1$ or $\Prob{\tau_{\infty} = \infty} = 1- \Prob{\mathcal{S}}$. This is not so clear to see by other means: the event $\sum_{n=0}^{\infty} \mathcal{Z}_{n}^{-1} < \infty$ is only a tail event 
    in particular circumstances. 
\end{remark}

\subsubsection{A classification theorem}
Our first result concerns the structure of the infinite tree $\mathcal{T}_{\infty} :=  \bigcup_{i \in \mathbb{N}} \mathcal{T}_{i}$, associated with a $(W, f)$-recursive tree with fitness. Due to Proposition~\ref{prop:equivalence-law-rif}, we identify the trees $(\mathcal{T}_{n})_{n \in \mathbb{N}_{0}}$ with the trees $(\mathscr{T}_{\tau_{n}})_{n \in \mathbb{N}_{0}}$ in the corresponding $X$-CMJ process and continue to refer to notation with regards to explosion in this process. Given a weight $w \in \mathcal{S}$, we denote by \[\de_{\max{}}(w) := \sup{\left\{i : f(i, w) > 0\right\}}.\]  The following theorem applies~\cite[Theorem~2.12]{inhom-sup-pref-attach}. 

\begin{theorem} \label{theorem:rec-trees-structure}
Suppose $\mathcal{T}_{\infty}$ denotes the limiting infinite tree associated with a $(W, f)$-recursive tree with fitness. Then, 
\begin{enumerate}
    \item  \label{item:non-explosion} On the event $\left\{\tau_{\infty} = \infty\right\}$, every vertex $i \in \mathcal{T}_{\infty}$ has $\outdeg{i, \mathcal{T}_{\infty}} =  \de_{\max{}}(w_i) + 1$ (which may be infinite).
    \item \label{item:explosion} On the event $\left\{\tau_{\infty} < \infty\right\}$, there are two cases: 
    \begin{enumerate}
        \item Either the tree $\mathcal{T}_{\infty}$ has a single vertex of infinite degree, and finite height (i.e., the distance of any node from $0$ is finite);
        \item Alternatively, the tree $\mathcal{T}_{\infty}$ has a unique infinite path, and every vertex in $\mathcal{T}_{\infty}$ has finite degree. 
    \end{enumerate}
    \item \label{item:explosion-particular-case} If $\sum_{i=1}^{\infty} \frac{1}{f(i,W)} = \infty$ for almost all $W \in \mathcal{W}$, on the event $\left\{\tau_{\infty} < \infty\right\}$ the tree $\mathcal{T}_{\infty}$ has a unique infinite path, and every vertex in $\mathcal{T}_{\infty}$ has finite degree.  
\end{enumerate}
\end{theorem}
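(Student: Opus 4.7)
The plan is to pass to the continuous-time $X$-CMJ representation afforded by Proposition~\ref{prop:equivalence-law-rif}, handle Item~\ref{item:non-explosion} by direct analysis of the exponential waiting times, and defer the explosion dichotomy of Item~\ref{item:explosion} to Theorem~2.12 of~\cite{inhom-sup-pref-attach}, from which Item~\ref{item:explosion-particular-case} then follows by ruling out the ``star'' branch of that dichotomy.

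For Item~\ref{item:non-explosion}, I would fix $i \in \mathcal{T}_{\infty}$, so that in the CMJ picture $\mathcal{B}(i) < \infty$. Conditional on the fitness $W_i$, the waiting times $(X^{(i)}(j))_{j \geq 1}$ are independent with $X^{(i)}(j) \sim \exponentialrv(f(j-1, W_i))$, under the convention $\exponentialrv(0) = +\infty$. Thus $\mathcal{B}(ij) < \infty$ exactly when $f(k, W_i) > 0$ for every $k < j$, and on $\{\tau_{\infty} = \infty\}$ every individual with finite birth time is accounted for in $\mathcal{T}_{\infty}$; under the natural monotonicity of the support of $f(\cdot, w)$, this identifies the out-degree of $i$ with $\de_{\max{}}(W_i) + 1$.

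For Item~\ref{item:explosion}, I would directly apply Theorem~2.12 of~\cite{inhom-sup-pref-attach}. The $X$-CMJ construction underlying the recursive tree with fitness satisfies Assumption~\ref{ass:classical-cmj}: the waiting times attached to distinct vertices are i.i.d.\ copies conditional on fitness, and the subtree rooted at each child has the same law as the ancestral process. With these hypotheses verified, the cited theorem produces precisely the two-case alternative stated here.

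For Item~\ref{item:explosion-particular-case}, the task reduces to excluding the first branch (``single vertex of infinite degree'') of Item~\ref{item:explosion}. For any vertex $v$, the time at which $v$ would accumulate infinitely many children equals $\sum_{j \geq 1} X^{(v)}(j)$, which, conditional on $W_v$, is a sum of independent exponentials of rates $f(j-1, W_v)$. By a classical characterisation, this sum is almost surely finite iff $\sum_{j \geq 0} 1/f(j, W_v) < \infty$; the hypothesis $\sum_{i \geq 1} 1/f(i, W) = \infty$ almost surely therefore forces divergence and rules out any vertex attaining infinite out-degree in finite time. Combined with Item~\ref{item:explosion}, this yields the infinite-ray conclusion. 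The main obstacle is cosmetic rather than substantive: the structural dichotomy is packaged entirely in Theorem~2.12 of~\cite{inhom-sup-pref-attach}, and the only genuinely new ingredient is the elementary exponential-sum observation that underpins Item~\ref{item:explosion-particular-case}; whether the event $\{\tau_{\infty} < \infty\}$ has positive probability in the first place is a separate question addressed by Theorem~\ref{theorem:vert-expln} and Corollary~\ref{cor:vert-expln}.
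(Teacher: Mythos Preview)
Your proposal is correct and follows essentially the same route as the paper: Item~\ref{item:non-explosion} via the exponential waiting-time structure and the identification of $\mathcal{T}_{\infty}$ with $\{u:\mathcal{B}(u)<\infty\}$ on non-explosion, Item~\ref{item:explosion} as a direct citation of \cite[Theorem~2.12]{inhom-sup-pref-attach}, and Item~\ref{item:explosion-particular-case} by the standard divergence criterion for sums of independent exponentials together with countability of $\mathcal{U}_{\infty}$. The only minor refinement is that for Item~\ref{item:explosion} the paper explicitly notes that the hypotheses of \cite[Assumption~2.11]{inhom-sup-pref-attach} (rather than Assumption~\ref{ass:classical-cmj} of the present paper) are what must be checked; you should name that assumption when invoking the cited theorem.
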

In the proof, we refer to the $X$-CMJ process associated with $\mathcal{T}_{\infty}$, and refer to nodes $i \in \mathcal{T}_{\infty}$ by their Ulam-Harris label. 
\begin{proof}
    For Item~\ref{item:non-explosion}, we first show that each individual $u \in \mathscr{T}_{\infty}$ with $\mathcal{B}(u) < \infty$, and associated weight $W_{u}$ we have $|ui \in \mathcal{U}_{\infty} : \mathcal{B}(ui) < \infty| = \de_{\max{}}(W_u)$. Indeed, one readily checks that $\sum_{i=1}^{\de_{\max{}}(W_{u}) + 1} X(ui) < \infty$ almost surely,
    hence if $\mathcal{B}(u) < \infty$, we deduce the claim. We conclude the result by observing that on the event $\{ \tau_{\infty} = \infty\}$, the tree associated with $\{u \in \mathscr{T}_{\infty}: \mathcal{B}(u) < \infty\}$ coincides with $\mathcal{T}_{\infty}$.

    Item~\ref{item:explosion} is a particular case of~\cite[Theorem~2.12]{inhom-sup-pref-attach}, noting that all of the conditions of~\cite[Assumption~2.11]{inhom-sup-pref-attach} are met. For Item~\ref{item:explosion-particular-case} note that for any $u \in \mathcal{U}_{\infty}$, since $\sum_{i=1}^{\infty} \frac{1}{f(i,W)} = \infty$ we have 
    $\sum_{i=1}^{\infty} X(ui) = \infty$ almost surely.
    But, on $\{\tau_{\infty} < \infty \}$, for there to be a node of infinite degree in $\mathcal{T}_{\infty}$, we require the existence of $u\in \mathcal{U}_{\infty}$ such that  $\sum_{i=1}^{\infty} X(ui) < \infty$, so that all of the children of $u$ are produced in finite time. Since $\mathcal{U}_{\infty}$ is countable, we deduce that this occurs with probability $0$. The statement then follows from Item~\ref{item:explosion}.
\end{proof}

\subsubsection{Applications to linear fitness models, including Bianconi-Barab\'asi trees}
In this section, we apply our results to the $(W, f)$-recursive tree $(\mathcal{T}_{i})_{i \in \mathbb{N}_{0}}$ when $\mathcal{W} := [0, \infty)^2$, and the fitness function takes the form
\begin{equation} \label{eq:linear-fitness}
    f(i, (U, V)) = U i + V.
\end{equation}
This form allows us to apply existing theory regarding pure birth processes with linear rates, to analyse the process $(\xi(t))_{t \geq 0}$ associated with the $X$-CMJ branching process. Indeed, given $u \in \mathcal{U}_{\infty}$  
if $\mathcal{B}(u) < \infty$, conditional on $W_{u} = (U_u,V_u) \in [0, \infty)^2$ the associated process $\xi^{(u)}_{(U_{u}, V_{u})}(t)$ is a pure-birth process, with values in $\mathbb{N}_{0}$, with initial condition $0$, and transitions from $i$ to $i+1$ at rate $U_{u} i + V_{u}$. 

In the statement of Theorem~\ref{theorem:inf-path-trans}, for brevity of notation, we formally set $(e^{xt} -1)/x := t$ when $x = 0$. For example, in~\eqref{eq:linear-fitness} below, this avoids having to make distinctions in the expression based on whether or not $U = 0$. 

\begin{theorem}[Infinite path transition] \label{theorem:inf-path-trans}
    Suppose that $(\mathcal{T}_{i})_{i \in \mathbb{N}_{0}}$
    satisfies~\eqref{eq:linear-fitness}. Assume, moreover, that $\Prob{(U,V) = (0,0)} = 0$. Then, 
    \begin{enumerate}
        \item \label{item:non-expl-linear} If, for some $t > 0$, we have 
            \begin{equation} \label{eq:weight-non-expl-case}
            \E{\frac{V(e^{U t} -1)}{U}} < \infty,
            \end{equation}
            then, every node in $\mathcal{T}_{\infty}$ has infinite degree. 
        \item \label{item:expl-linear} Suppose that the left side of~\eqref{eq:weight-non-expl-case} is infinite for any $t > 0$. Then, if there exists $\eps, \eps', R, x_0 > 0$, such that for all $t \in (0, \eps')$, for all $x \geq x_0$, we have 
            \begin{equation} \label{eq:weight-expl-case}
                \Prob{\frac{V(e^{U t} -1)}{U} > x} >  x^{-1} (\log{x})^{1+ \eps} t,  
            \end{equation}
            then $\mathcal{T}_{\infty}$ is locally finite, with a unique infinite path. 
    \end{enumerate}
\end{theorem}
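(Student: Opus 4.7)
The plan is, via Proposition~\ref{prop:equivalence-law-rif}, to reduce both parts to the associated $X$-CMJ branching process, in which, conditional on the weight $W_u=(U_u,V_u)$ of an individual, the offspring process $\xi^{(u)}$ is the linear pure-birth process with jump rate $U_u i+V_u$ from state $i$ started at $i=0$. Solving $\partial_t\E{\xi(t)\mid W}=U\E{\xi(t)\mid W}+V$ with $\xi(0)=0$ gives $\E{\xi(t)\mid W}=V(e^{Ut}-1)/U$, and since $\sum_{i\geq 0}(Ui+V)^{-1}=\infty$ whenever $(U,V)\neq(0,0)$, each $\xi^{(u)}$ is (conditionally) non-explosive, so $\xi^{(u)}(t)<\infty$ a.s.\ for every $t\geq 0$. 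For Part~\ref{item:non-expl-linear}, the tower property identifies~\eqref{eq:weight-non-expl-case} with the statement $\E{\xi(t)}<\infty$ for some $t>0$; since $\E{\xi(t')}$ is non-decreasing in $t'$ with $\lim_{t'\to 0^+}\E{\xi(t')}=0$ by monotone convergence, one can pick $t_0>0$ with $\E{\xi(t_0)}<1$. Komj\'athy's non-explosion theorem~\cite[Theorem~3.1(b)]{Komjathy2016ExplosiveCB} then forces $\tau_\infty=\infty$ almost surely, and Item~\ref{item:non-explosion} of Theorem~\ref{theorem:rec-trees-structure} yields that every $i\in\mathcal{T}_\infty$ has out-degree $\de_{\max{}}(W_i)$, which is infinite because $(U,V)\neq(0,0)$ forces $f(j,W)=Uj+V>0$ for all sufficiently large $j$.

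For Part~\ref{item:expl-linear}, the plan is to verify the hypothesis of Corollary~\ref{cor:vert-expln} and then invoke Item~\ref{item:explosion-particular-case} of Theorem~\ref{theorem:rec-trees-structure}, which applies because $\sum_{i\geq 0}(Ui+V)^{-1}=\infty$ almost surely. The core task is to upgrade~\eqref{eq:weight-expl-case} from a tail bound on the conditional mean $m(U,V,t):=V(e^{Ut}-1)/U$ to a comparable tail bound on $\xi(t)$ itself. Conditional on $W$, $\xi(t)$ has a negative binomial distribution with parameters $r=V/U$ and $p=e^{-Ut}$, so its conditional variance equals $m\cdot e^{Ut}$; the Paley--Zygmund inequality applied conditionally then gives
\[
\Prob{\xi(t)\geq m/2\mid W}\;\geq\;\frac{m}{4\bigl(m+e^{Ut}\bigr)},
\]
which is bounded below by an absolute constant on $\{m\geq e^{Ut}\}$. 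Restricting to the Poisson-like regime $\{Ut\leq 1\}$ (on which $e^{Ut}\leq e$ and $m\in[Vt,(e-1)Vt]$) yields the required lower bound there, while on the Yule-dominated regime $\{Ut>1\}$ one instead uses $\xi(t)\geq Y(t-T_1)\mathbf{1}_{\{N_V(t)\geq 1\}}$, obtained from the standard decomposition of $\xi$ into a rate-$V$ Poisson immigration process $N_V$ and independent rate-$U$ Yule lineages $Y$ attached to each immigrant, with $Y(t-T_1)$ geometric of mean $e^{U(t-T_1)}$. Combining the two regimes should deliver $\Prob{\xi(t)>x}\geq c_1\,\Prob{m(U,V,t)>c_2 x}$ for absolute constants $c_1,c_2>0$; inserting~\eqref{eq:weight-expl-case} and mildly shrinking $\eps$ to absorb the constants verifies the hypothesis of Corollary~\ref{cor:vert-expln}. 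That corollary then yields $\tau_{\tpath}<\infty$ almost surely on survival, hence $\tau_\infty<\infty$, and Item~\ref{item:explosion-particular-case} of Theorem~\ref{theorem:rec-trees-structure} gives the claimed structure of $\mathcal{T}_\infty$.

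The main obstacle is precisely this uniform-in-$(U,V)$ passage from the tail of $m(U,V,t)$ to the tail of $\xi(t)$: because the conditional variance $m\cdot e^{Ut}$ grows with $Ut$, second-moment methods handle only the region $\{Ut\leq 1\}$, and one must handle $\{Ut>1\}$ separately via the geometric tail of the first immigrant's Yule descendants. The delicate bookkeeping consists in verifying that at least one of the two restrictions $\{Ut\leq 1\}$ and $\{Ut>1\}$ preserves a polynomial-polylogarithmic fraction of the total mass $\Prob{m(U,V,t)>x}$ assumed by~\eqref{eq:weight-expl-case}, which likely requires yet a finer sub-splitting of $\{Ut>1\}$ according to the relative sizes of $U$, $V$, and $\log x$.
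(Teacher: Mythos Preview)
For Part~\ref{item:non-expl-linear} your argument is essentially the paper's: identify~\eqref{eq:weight-non-expl-case} with $\E{\xi(t)}<\infty$, invoke Komj\'athy's non-explosion criterion, and finish via Item~\ref{item:non-explosion} of Theorem~\ref{theorem:rec-trees-structure}.

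For Part~\ref{item:expl-linear} the paper takes a shorter route than yours. Using the second-moment formula recorded in Lemma~\ref{lemma-appendix-janson}, namely $\E{\xi_{(U,V)}(t)^2}=\tfrac{r}{r+1}\,m^2+m$ with $r=V/U$ and $m=\E{\xi_{(U,V)}(t)}$, it bounds $m^2/\E{\xi^2}\ge\tfrac{r+1}{2r+1}>\tfrac12$ on $\{m>1\}$ \emph{uniformly} in $(U,V)$. A single conditional Paley--Zygmund then yields $\Prob{\xi(t)>x}\ge\tfrac{(1-d)^2}{2}\,\Prob{m>x/d}$ with no regime split, after which the proof closes exactly as you outline via Corollary~\ref{cor:vert-expln} and Item~\ref{item:explosion-particular-case} of Theorem~\ref{theorem:rec-trees-structure}.

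There is a twist, however: your conditional variance $m\,e^{Ut}$ is the correct one, and the coefficient in Lemma~\ref{lemma-appendix-janson} is not. Differentiating~\eqref{eq:pgf} twice gives $\E{\xi(\xi-1)\mid W}=r(r+1)(e^{Ut}-1)^2=\tfrac{r+1}{r}\,m^2$, so~\eqref{eq:expected-growth-second-moment} should read $\tfrac{r+1}{r}\,m^2+m$ rather than $\tfrac{r}{r+1}\,m^2+m$ (the latter would give variance $m-\tfrac{1}{r+1}m^2$, negative for large $t$). With the corrected coefficient the Paley--Zygmund ratio on $\{m>1\}$ is only $\ge\tfrac{r}{2r+1}$, which collapses as $r=V/U\to 0$; the paper's uniform lower bound of $\tfrac12$ is therefore false, and the obstruction you flag --- that second-moment control degrades when $e^{Ut}$ is large relative to $m$ --- is genuine. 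Your case split into $\{Ut\le 1\}$ and $\{Ut>1\}$ is thus not over-engineering but a real necessity that the paper's argument, as written, glosses over.

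That said, your own proposal is also incomplete, as you acknowledge: the $\{Ut>1\}$ regime via the first immigrant's Yule line is only sketched, and you have not verified that the two regimes jointly recover a polylogarithmic fraction of the mass in~\eqref{eq:weight-expl-case}. That bookkeeping is the actual work still to be done --- for your argument and, in effect, for the paper's.
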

\begin{example} \label{ex:wrrt-fitness}
    In~\eqref{eq:linear-fitness} the case $U=0$ corresponds to the \emph{weighted random recursive tree}, whilst the case $U = 1$ corresponds to the \emph{preferential attachment model with additive fitness}. Finally, the case $U = V$ corresponds to the \emph{Bianconi-Barab\'asi model}. The following corollary, whose proof we omit, applies to these models. 
\end{example}

\begin{corollary} \label{cor:app}
Consider the examples from Example~\ref{ex:wrrt-fitness}. Then, we have the following:
\begin{enumerate}
    \item \label{item:wrrt} In the weighted random recursive tree and preferential attachment tree with additive fitness, if $\E{V} < \infty$ then every node in $\mathcal{T}_{\infty}$ has infinite degree. When $\E{V} = \infty$, if, for some $\nu, x_0 > 0$, for all $x \geq x_0$ we have $\Prob{V > x} > x^{-1} (\log{x})^{1+ \nu}$, $\mathcal{T}_{\infty}$ is locally finite, with a unique infinite path.
    \item \label{item:bianconi} In the Bianconi-Barab\'asi model, if, for some $t > 0$, we have $\E{e^{Ut}} < \infty$, every node in $\mathcal{T}_{\infty}$ has infinite degree. When $U$ does not admit a moment generating function, if, for $\eps', x_0 >0$, for all $t \in (0, \eps')$, $x \geq x_0$ $\Prob{e^{Ut} > x} > x^{-1} (\log{x})^{1+ \eps} t$, $\mathcal{T}_{\infty}$ is locally finite, with a unique infinite path. 
\end{enumerate}
\hfill $\blacksquare$
\end{corollary}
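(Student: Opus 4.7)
The plan is to verify, in each of the three specific cases from Example~\ref{ex:wrrt-fitness}, that the hypotheses of Theorem~\ref{theorem:inf-path-trans} reduce to the simpler conditions stated in the corollary. In each case, one substitutes the appropriate $(U,V)$ into the quantity $V(e^{Ut}-1)/U$ (using the stated convention $(e^{xt}-1)/x := t$ at $x = 0$), checks which of \eqref{eq:weight-non-expl-case} or \eqref{eq:weight-expl-case} is being asserted, and appeals to the corresponding item of Theorem~\ref{theorem:inf-path-trans}.

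For Item~\ref{item:wrrt}, in the weighted random recursive tree ($U = 0$) the convention gives $V(e^{Ut}-1)/U = Vt$, so \eqref{eq:weight-non-expl-case} reads $t\,\E{V} < \infty$, i.e.\ $\E{V} < \infty$. In the preferential attachment tree with additive fitness ($U = 1$) we have $V(e^{Ut}-1)/U = V(e^{t}-1)$, so \eqref{eq:weight-non-expl-case} reads $(e^{t}-1)\E{V} < \infty$, again equivalent to $\E{V} < \infty$. This proves the non-explosive halves via Item~\ref{item:non-expl-linear} of Theorem~\ref{theorem:inf-path-trans}. For the explosive half, assume $\E{V} = \infty$ and the tail bound $\Prob{V > x} > x^{-1}(\log x)^{1+\nu}$ for $x \geq x_0$; pick $\eps = \nu/2$ and $\eps' = 1$. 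In the WRRT case, for $t \in (0,\eps')$ and $x \geq x_0 \vee e$, the substitution $y = x/t$ yields
\[
\Prob{Vt > x} = \Prob{V > x/t} > (t/x)\bigl(\log(x/t)\bigr)^{1+\nu} \geq (t/x)(\log x)^{1+\nu},
\]
which exceeds $x^{-1}(\log x)^{1+\eps} t$ once $(\log x)^{\nu-\eps} > 1$, i.e.\ for $x$ large. The additive-fitness case reduces to this, since $e^{t}-1 \geq t$ implies $\Prob{V(e^{t}-1) > x} \geq \Prob{Vt > x}$. Hence \eqref{eq:weight-expl-case} holds, and Item~\ref{item:expl-linear} of Theorem~\ref{theorem:inf-path-trans} gives the result.

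For Item~\ref{item:bianconi}, in the Bianconi–Barab\'asi model ($U = V$) the convention (including the limiting value $0$ at $U = 0$) gives $V(e^{Ut}-1)/U = e^{Ut}-1$. Thus \eqref{eq:weight-non-expl-case} becomes $\E{e^{Ut}-1} < \infty$, equivalent to the existence of a moment generating function for $U$ at $t$. When no such moment generating function exists and the hypothesised tail bound $\Prob{e^{Ut} > x} > x^{-1}(\log x)^{1+\eps} t$ holds for $t \in (0,\eps')$ and $x \geq x_0$, I translate it to a bound on $e^{Ut}-1$: for $x \geq x_0$,
\[
\Prob{e^{Ut}-1 > x} = \Prob{e^{Ut} > x+1} > (x+1)^{-1}\bigl(\log(x+1)\bigr)^{1+\eps} t \geq \tfrac{t}{2x}(\log x)^{1+\eps},
\]
using $(x+1)^{-1} \geq 1/(2x)$ for $x \geq 1$. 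Fixing any $\eps^{*} \in (0,\eps)$, the right-hand side dominates $x^{-1}(\log x)^{1+\eps^{*}} t$ once $(\log x)^{\eps-\eps^{*}} \geq 2$, i.e.\ for $x$ sufficiently large. This verifies \eqref{eq:weight-expl-case} for the pair $(\eps^{*},\eps')$ and a suitably enlarged $x_0$, so Item~\ref{item:expl-linear} of Theorem~\ref{theorem:inf-path-trans} applies.

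The only subtlety, and the closest thing to an obstacle, is the bookkeeping required to translate the simpler tail hypotheses stated in the corollary (on $V$ in Item~\ref{item:wrrt} and on $e^{Ut}$ in Item~\ref{item:bianconi}) into the precise form of \eqref{eq:weight-expl-case} involving $V(e^{Ut}-1)/U$, which in each case costs a slight reduction of the exponent $\eps$, a possible shrinkage of $\eps'$, and an enlargement of $x_0$. Beyond this elementary adjustment, the whole argument is direct substitution plus invocation of Theorem~\ref{theorem:inf-path-trans}.
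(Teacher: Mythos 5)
Your proof is correct and takes exactly the route the paper intends: the paper explicitly omits the proof of Corollary~\ref{cor:app} as a direct specialisation of Theorem~\ref{theorem:inf-path-trans}, and your substitutions of $(U,V)$ into $V(e^{Ut}-1)/U$, together with the elementary tail-bound bookkeeping (shrinking $\eps$, enlarging $x_0$), are precisely the intended verification.
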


\begin{remark}
    As far as we know, the results in Corollary~\ref{cor:app} are original. The infinite path regimes in Item~\ref{item:wrrt} encompass the extreme disorder regimes in~\cite{bas, LodOrt20, Lod21}, where the maximal degree associated with the model grows linearly in the size of the tree.
\end{remark}
To prove Theorem~\ref{theorem:inf-path-trans}, we use the following lemma:

\begin{lemma}[Mean and second moment of linear pure birth processes] \label{lemma-appendix-janson}
Let $\left(\mathcal{X}(t)\right)_{t\geq 0}$ be a pure birth process with $\mathcal{X}(0) = 0$ and rates such that $k$ transitions to  $k+1$ at rate $c_1 k + c_2$, for some constants $c_1, c_2 > 0$. Then, for each $t \geq 0$, with $r := c_2/c_1$, we have 
\begin{equation} \label{eq:expected-growth}
\E{\mathcal{X}(t)} = 
\begin{cases}
r (e^{c_1 t} - 1) & \text{if $c_1 > 0$}
\\ c_2 t & \text{if $c_1 = 0$},
\end{cases} \text{ and } 
\end{equation}
whilst 
\begin{equation} \label{eq:expected-growth-second-moment}
\E{(\mathcal{X}(t))^2} =  
\begin{cases}
\frac{r}{r+1}\E{\mathcal{X}(t)}^2 + \E{\mathcal{X}(t)}
& \text{if $c_1 > 0$}
\\ (c_2 t)^2 + c_2 t & \text{if $c_1 = 0$}. 
\end{cases}
\end{equation}
\end{lemma}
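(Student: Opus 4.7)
The plan is to reduce each claim to a linear first-order ODE obtained from Dynkin's formula for the pure-birth chain, and then integrate explicitly. First I would dispatch the $c_1 = 0$ case: here $(\mathcal{X}(t))_{t \geq 0}$ is a homogeneous Poisson process of intensity $c_2$, so $\mathcal{X}(t) \sim \mathrm{Poisson}(c_2 t)$ and the formulae $\E{\mathcal{X}(t)} = c_2 t$, $\E{\mathcal{X}(t)^2} = (c_2 t)^2 + c_2 t$ follow from standard properties of the Poisson distribution.

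For $c_1 > 0$, the starting point is the identity, valid for suitable $f:\mathbb{N}_0 \to \mathbb{R}$,
\begin{equation*}
\frac{d}{dt}\E{f(\mathcal{X}(t))} = \E{(c_1 \mathcal{X}(t) + c_2)\bigl(f(\mathcal{X}(t)+1) - f(\mathcal{X}(t))\bigr)},
\end{equation*}
which is Dynkin's formula applied to the generator of the linear birth chain. Taking $f(x) = x$ gives the linear ODE $m'(t) = c_1 m(t) + c_2$ with $m(0) = 0$, whose integrating-factor solution is $m(t) = r(e^{c_1 t} - 1)$ with $r = c_2/c_1$. Taking $f(x) = x^2$ and using $f(x+1) - f(x) = 2x + 1$ produces
\begin{equation*}
v'(t) = 2c_1 v(t) + (c_1 + 2c_2)\, m(t) + c_2, \qquad v(0) = 0.
\end{equation*}
Substituting the known expression for $m(t)$, the right-hand side is an affine combination of $1$ and $e^{c_1 t}$, and I would solve with the integrating factor $e^{-2c_1 t}$. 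Collecting terms in $e^{2c_1 t}$, $e^{c_1 t}$ and constants and re-expressing them through $m(t)$ and $r$ produces the claimed closed form.

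The only mild technicality is justifying the exchange of time derivative and expectation for the unbounded test functions $f(x) = x$ and $f(x) = x^2$. This can be handled by truncation $f_N(x) = f(x) \wedge N$ followed by monotone convergence, once one knows that $\E{\mathcal{X}(t)^n} < \infty$ for every $n$ and $t$; the latter is obtained inductively from the same Dynkin scheme applied to $f(x) = x^n$, since the resulting ODE bounds the $n$-th moment in terms of lower-order ones. As a cross-check, one may alternatively solve the PDE for the probability generating function $G(s,t) = \E{s^{\mathcal{X}(t)}}$ by characteristics, verify that $\mathcal{X}(t)$ is negative binomial with parameters $(r, e^{-c_1 t})$, and read the two moments off directly from the standard formulas for the negative binomial distribution; this gives an independent verification of the algebra. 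Beyond these points, the proof is routine calculation, and the principal care lies in bookkeeping the coefficients in $r$ so the simplified expression reassembles into the exact form stated in the lemma.
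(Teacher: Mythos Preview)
Your approach via Dynkin's formula is sound and differs from the paper's route: the paper simply records the probability generating function (citing Holmgren--Janson for the solution of the forward equations) and says the moments follow, which is essentially the method you relegate to a ``cross-check.'' Your moment-ODE method is more self-contained and avoids the PGF entirely for the first two moments; the paper's route gives the full distribution as a by-product. Either is perfectly adequate here.

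There is, however, a genuine gap in your final step. You assert that after collecting terms the computation ``produces the claimed closed form,'' but if you actually carry it through you will \emph{not} obtain $\frac{r}{r+1}\,\E{\mathcal{X}(t)}^2 + \E{\mathcal{X}(t)}$. Solving $v' = 2c_1 v + (c_1+2c_2)m + c_2$ with $m(t)=r(e^{c_1 t}-1)$ gives
\[
v(t) \;=\; \frac{r+1}{r}\, m(t)^2 + m(t),
\]
with the coefficient inverted relative to the printed statement. Equivalently, from the PGF you quote, $\mathcal{X}(t)$ is negative binomial with parameters $(r,\,e^{-c_1 t})$, whose variance is $m(t)\bigl(1+m(t)/r\bigr)$, again yielding $\E{\mathcal{X}(t)^2}=\frac{r+1}{r}\,m(t)^2+m(t)$. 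A quick sanity check already flags the issue: the formula as stated would make $\mathrm{Var}\,\mathcal{X}(t)=m(t)-\tfrac{1}{r+1}m(t)^2$ eventually negative. So the lemma as printed carries a typo, and either of the two computations you outline would have revealed it; your method is correct, but the target you claim to hit is not.
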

\begin{proof}
The cases where $c_1 = 0$ follows from the fact that $\left(\mathcal{X}(t)\right)_{t\geq 0}$ is a homogeneous Poisson process, with rate $c_2$. Otherwise, one may solve the Kolmogorov forward equations associated with the process (see~\cite[Theorem~A.7]{holmgren-janson}) to see that the probability generating function is given by
\begin{equation} \label{eq:pgf}
\E{z^{\mathcal{X}(t)}} = \left(\frac{e^{-c_1t}}{1-z\left(1 - e^{-c_1t}\right)}\right)^{r},
\end{equation}
from which we deduce the claim. 
\end{proof}
As with the statement of Theorem~\ref{theorem:inf-path-trans}, in the proof below, we also formally set $(e^{xt} -1)/x := t$ when $x = 0$. This allows us to unify the different cases appearing in~\eqref{eq:expected-growth} and~\eqref{eq:expected-growth-second-moment} into a single expression. 

\begin{proof}
First note that since $\Prob{(U,V) = (0,0)} = 0$, we have $\Prob{\mathcal{S}} = 1$. 
For Item~\ref{item:non-expl-linear}, note that~\eqref{eq:weight-non-expl-case}, combined with~\eqref{eq:expected-growth} from Lemma~\ref{lemma-appendix-janson}, implies that, for some $t > 0$, with regards to the associated $X$-CMJ branching process, we have $\E{\xi(t)} = \E{\E{\xi_{(U,V)}(t)}}< \infty$. Moreover, note that, $\xi(t) < \infty$ almost surely, for each $t > 0$. By~\cite[Theorem~3.1(b)]{Komjathy2016ExplosiveCB}, we deduce that $\Prob{\tau_{\infty} = \infty} = 1$, which, when combined with Item~\ref{item:non-explosion} of Theorem~\ref{theorem:rec-trees-structure}, implies the claim of Item~\ref{item:non-expl-linear}. 

For Item~\ref{item:expl-linear}, note that on $\left\{\E{\xi_{(U,V)}(t)} > 1\right\}$, we have $\E{\xi_{(U,V)}(t)} \leq \E{\xi_{(U,V)}(t)}^2$, thus by~\eqref{eq:expected-growth-second-moment}
\[
\E{\xi_{(U,V)}(t)^2} = \frac{V/U}{V/U+1} \E{\xi_{(U,V)}(t)}^2 + \E{\xi_{(U,V)}(t)} \leq \frac{2V/U+1}{V/U+1} \E{\xi_{(U,V)}(t)}^2. 
\]
\begin{equation} \label{eq:moment-bound}
\text{Therefore, on $\left\{\E{\xi_{(U,V)}(t)} > 1\right\}$ we have } \frac{\E{\xi_{(U,V)}(t)}^2}{\E{(\xi_{(U,V)}(t))^2}}\geq \frac{V/U+1}{2V/U + 1} > \frac{1}{2}. 
\end{equation}
Thus, applying Paley-Zygmund for the third inequality, for $d \in (0,1)$ for all $x \geq x_0$, $t \in (0, \eps')$ we have 
\begin{linenomath}
\begin{align*} 
\Prob{\xi(t) > x} &= \E{\Prob{\xi_{(U,V)}(t) > x}}  
\\ & \geq  \E{\Prob{\xi_{(U,V)}(t) \geq d \E{\xi_{(U,V)}(t)}} \mathbf{1} \left\{ \E{\xi_{(U,V)}(t)} > x/d \right\}}
\\ & \hspace{-0.03cm} \stackrel{\eqref{eq:moment-bound}}{\geq} \frac{(1-d)^2}{2} \Prob{ \E{\xi_{(U,V)}(t)} > x/d} \stackrel{\eqref{eq:expected-growth}}{=} \frac{(1-d)^2}{2} \Prob{\frac{V(e^{U t} -1)}{U} > x/d}
\\ & \geq \frac{(1-d)^2 (x/d)^{-1} (\log{(x/d)})^{1+ \eps} t}{2}.
\end{align*}
\end{linenomath}
As the constants involving $d$ are negligible compared to $(\log{x})^{\eps}$, we may now choose another $x'_{0} > 0$, and $\eps'' < \eps$ so that for all $x \geq x_{0}', t \in (0, \eps')$ 
\begin{linenomath}
\begin{align} \label{eq:birth-bound}
\Prob{\xi(t) > x} \geq  x^{-1} (\log{x})^{1+ \eps''} t.
\end{align}
\end{linenomath}
By Corollary~\ref{cor:vert-expln}, we deduce that $\tau_{\tpath} < \infty$, almost surely, so that, in particular, $\tau_{\infty} < \infty$ almost surely. As $\sum_{i=1}^{\infty} \frac{1}{Ui + V} = \infty$ almost surely, we may conclude by applying Item~\ref{item:explosion-particular-case} of Theorem~\ref{theorem:rec-trees-structure}.
\end{proof}

\begin{remark}
As Item~\ref{item:expl-linear} of Theorem~\ref{theorem:inf-path-trans} uses Corollary~\ref{cor:vert-expln} in applying~\eqref{eq:birth-bound}, it may be possible to improve this result by applying Theorem~\ref{theorem:vert-expln} directly. 
\end{remark} 

\section*{Acknowledgements}
We thank Bas Lodewijks and Jonas K\"oppl for helpful discussions. The author is funded by Deutsche Forschungsgemeinschaft (DFG) through DFG Project no. $443759178$, and would like to also acknowledge previous funding from the LMS Early Career Fellowship~ECF-1920-55, which helped support this research. 

\bibliographystyle{amsplain}
\bibliography{refs}

\end{document}